\definecolor{dkgreen}{rgb}{0,0.6,0}
\definecolor{gray}{rgb}{0.5,0.5,0.5}
\definecolor{mauve}{rgb}{0.58,0,0.82}
\tiny\color{gray},
\newtheorem{theorem}{Theorem}[section]
\newtheorem{lemma}[theorem]{Lemma}
\newtheorem{remark}[theorem]{Remark}
\newtheorem{corollary}[theorem]{Corollary}
\newtheorem{proposition}[theorem]{Proposition}
\newtheorem{example}[theorem]{Example}
\newtheorem{definition}[theorem]{Definition}
\begin{document}

\title[Lattice Lipschitz maps on Banach function spaces]{Lattice Lipschitz superposition operators on Banach function spaces}

\author[R. Arnau, J. M. Calabuig, E. Erdo\u{g}an, E.  A. S\'{a}nchez P\'{e}rez]{ Roger Arnau$^*$, Jose M. Calabuig, Ezgi Erdo\u{g}an, \\ Enrique A. S\'{a}nchez P\'{e}rez}

\address{Ezgi Erdo\u{g}an.
              Department of Mathematics, Faculty of Sciences,  University of Marmara, 34722, Kad\i k\"{o}y, Istanbul, Turkey.}
\email{ ezgi.erdogan@marmara.edu.tr}

\address{ Roger Arnau$^*$, Jose M. Calabuig,
        Enrique \ A.\ S\'{a}nchez P\'{e}rez\\ {$^*$}Corresponding author.
              Instituto Universitario de Matem\'atica Pura y Aplicada,
              Universitat Polit\`ecnica de Val\`encia, Camino de Vera s/n, 46022
              Valencia, Spain.}
            \email{ararnnot@posgrado.upv.es, jmcalabu@mat.upv.es, easancpe@mat.upv.es}

\keywords{Lipschitz; Operator; lattice Lipschitz, Banach function space, diagonal map}

 \subjclass[2010]{ Primary:
47J10,  %	Nonlinear spectral theory, nonlinear eigenvalue problems
46E30, 26A16 }

\thanks{%The first author was supported by T\"{U}BITAK- The Scientific and Technological Research Council of Turkey, Grant No:2211/E. 
The first author was supported by a contract of the Programa de Ayudas de Investigaci\'on y Desarrollo (PAID-01-21), Universitat Polit\`ecnica de Val\`encia.
This publication is part of the R\&D\&I project/grant PID2022-138342NB-I00 funded by MCIN/ AEI/10.13039/501100011033/ (Spain).}

\maketitle

\begin{abstract} 
We analyse and characterise the notion of lattice Lipschitz operator (a class of superposition operators, diagonal Lipschitz maps) when defined between Banach function spaces. After showing some general results, we restrict our attention to the case of those Lipschitz operators which are representable by pointwise composition with a strongly measurable function. Mimicking the classical definition and characterizations of (linear) multiplication operators between Banach function spaces, we show that under certain conditions the requirement for a diagonal Lipschitz operator to be well-defined between two such spaces $X(\mu)$ and $Y(\mu)$  is that it can be represented by a strongly measurable function
which belongs to the Bochner space $\mathcal M(X,Y) \big(\mu, Lip_0(\mathbb R) \big). $  Here, $\mathcal M(X,Y) $ is the space of multiplication operators between $X(\mu)$ and $Y(\mu),$ and $Lip_0(\mathbb R)$ is the space of real-valued Lipschitz maps with real variable that are equal to $0$ in $0. $ This opens the door to a better understanding of these maps, as well as finding the relation of these operators to some normed tensor products and other classes of maps.
\end{abstract}

\section{Introduction and notation}

Lipschitz functions, which appeared as a tool in mathematical analysis, have found relevant applications in the field of differential equations from the beginning of the 20th century. 
Today, they have become a fundamental tool also in many fields of applied mathematics and artificial intelligence. Recently, more attention has returned to this topic from pure analysis, in a stream of research that tries to extend linear and multilinear issues and methods to the field of Lipschitz functions \cite{ang,cha,mas,farjo}: summability of operators \cite{bel,sal,ya}, ideals of Lipschitz operators \cite{ach0,ach,cab,cha2,mez}, lattice geometric properties of Lipschitz maps \cite{bra,chen} and many others are topics of current interest to the mathematical community.

Following this trend, we are interested in the Lipschitz version of diagonal maps (which appeared in the Euclidean space context) or  multiplication operators (when working in the setting of  function spaces). As can be easily seen this diagonal property coincides with a pointwise domination requirement, for which an order in the space is needed; since we are interested in the setting of the Banach function spaces, the requirement on the order can be easily formulated to provide the (equivalent) notion of lattice Lipschitz operator.  No explanation is needed when we state that diagonalization of operators between finite dimensional spaces, and also multiplication operators, apply in a vast number of pure and applied contexts, and extensions of these genuine linear concepts to other classes of operators are fully justified \cite{fucik,furi}. Thus, our motivation for this research is twofold. On the one hand, we want to study Lipschitz diagonal operators to provide tools to extend the fundamental factorization theorems for operators  between function spaces, which are now at the core of functional analysis (\cite{DieJarTon}, see the last section of the article). On the other hand, guided by some recent applications of Lipschitz functions in artificial intelligence algorithms (mainly in reinforcement learning \cite{asa,cal,fal} but also in some separation techniques with applications in clustering procedures \cite{von}), we want to increase the mathematical tools for this applied research.

The family of functions we are interested in are a particular case of what are called superposition operators \cite{appel,appel2}. Since the eighties of the last century there have been some specific investigations on different properties of Lipschitz-type operators that are also superposition operators. In fact, some relevant characterizations of locally Lipschitz superposition operators involving inequalities such as the ones we analyze here can be found in some papers on the subject (see for example \cite[Th. 1]{appel}). However, although these inequalities are interesting for analyzing the properties of Lipschitz superposition operators, in this article we face the problem of the functional representation of these maps. In other words, in this paper we are interested in finding some general conditions to ensure that a given lattice Lipschitz operator can be represented as a  strongly measurable vector function belonging to some special K\"othe-Bochner space, by using a specific definition of how a vector function acts on a scalar function belonging to a  Banach function space.

Thus, we will devote the second part of the article to analyze how to relate lattice Lipschitz operators and strongly measurable vector valued  functions. In the first part, we will present the main results on the structure of lattice Lipschitz maps and diagonal Lipschitz operators on Banach function spaces. These spaces are the natural extension of classical Lebesgue spaces (but also of Euclidean spaces), and include, for example, Lorentz spaces of sequences and functions, as well as Orlicz spaces. The direct translation of the notion of lattice Lipschitz operator, which has originally been done in Euclidean spaces, gives a rather vague concept, which we show could include some ``pathological'' maps, which appear essentially due to the fact that the space of real-valued Lipschitz operators $Lip_0(\mathbb R)$ is not separable. To ensure a good behavior of the maps involved we will focus on the case of superposition operators that allow a representation as a pointwise composition with a strongly measurable function. 

%\textcolor{blue}{ As we said, the class of operators we are interested in are often called superposition operators in the general %context of the nonlinear analysis and the theory of partial differential equations. We will use  some of the ideas and results %coming from  this theoretical context in the present paper. }

Starting from the description of the notion of  ``diagonal'' operator between Banach function spaces in the linear case (Section 2), we will show some general properties, examples and counterexamples (Section 3), and, in the following Section 4 we will focus the attention on lattice Lipschitz operators that allow representations by strongly measurable functions, showing our main representation theorems. Finally, in Section 5 we show some applications, in connection with some classical normed tensor products and operator ideals.

Let us write the direct translation of the notion of lattice Lipschitz operator that has been originally given in the framework of Euclidean spaces (\cite[Definition 1]{arn}), to the setting of the Banach lattices. In the context of the paper \cite{arn}, we say that a Lipschitz operator $T:L \to V$ on the Euclidean space $L$ is lattice Lipschitz if there is a constant $K>0$ such that  for every pair of elements $x,y \in L,$ 
$$
\big| T(x) - T(y) \big| \le   K \big| x- y \big|.
$$
Other notions related to this definition can be found in the literature on maps on Banach lattices (see for example \cite[Ch.3]{cobzas}, \cite{jiang_li}).

\vspace{0.5cm}

Before getting into the subject, let us recall some basic definitions and results from the theory of Banach function spaces and general functional analysis.
Throughout the paper  $(\Omega,\Sigma,\mu)$ will be a finite measure space.
%If $\nu$ is another measure on $(\Omega, \Sigma)$ that is absolutely continuous with respect to $\mu,$ we write $\nu \ll \mu.$
Every representation of a simple function $f=\sum_{i=1}^n \lambda_i \chi_{A_i}$ is assumed to be disjoint, that is $A_i \cap A_j = \emptyset$ for $i \ne j.$
We assume that all the measures in the current manuscript are complete, meaning that every subset of a $\mu$-null (measurable) set is again measurable.
%If it is not specified, we also assume that all the measures are $\sigma$-finite.

Given a Banach space $E$, let us review some basic definitions about measurability of functions from $\Omega$ to $X$.
A simple function is a function as $\psi = \sum_{i=1}^n x_i \chi_{A_i},$ for $x_1,...,x_n \in E$ and disjoint $A_1,...,A_n \in \Sigma.$
A vector valued function $\Psi: \Omega \to E$ is said to be weakly measurable if the real function given by the  composition $x^* \circ \Psi$ is measurable for every $x^* \in E^*.$
$\Phi$ is strongly measurable if it equals almost everywhere the limit of a sequence of simple functions,  i.e.,
\begin{equation*}
	\lim_n \| \Psi(w) - \psi_n(w)\|_X = 0 \quad \text{ for almost every } \quad w \in \Omega.
\end{equation*}

A function $\Psi:\Omega \to E$ is essentially separably valued if there is a $\mu-$null set $ A \in \Sigma$ such that the restricted range $\Psi(\Omega \setminus A)$ is a separable set.
Pettis measurability theorem establishes that a vector valued function $\Psi$ is strongly measurable if and only if it is weakly measurable and essentially separably valued. In particular, if $E$ is separable we have that weak and strong measurability coincide. 

If $(\Omega, \Sigma,\mu)$ is a finite measure space, a Banach function space $X(\mu)$ is an ideal in $L^0(\mu)$ with the $\mu-$a.e. order that is a Banach space, satisfying that for  all $f,g \in X(\mu)$ such that $|f| \le |g|$ we have  $ \|f\|_{X(\mu)} \le \|g\|_{X(\mu)}.$ It is a Banach lattice, and we always assume that $\chi_{\Omega} \in X(\mu)$ (and so this function is an order unit).  We say that a Banach function space $X(\mu)$ has the Fatou property if for every increasing sequence $(f_n)_n \in X(\mu)$ that is norm bounded and converging $\mu-$a.e. to a measurable function $f,$ we have that $f \in X(\mu)$ too.

Recall now some notions and notation on K\"othe-Bochner spaces of vector valued  integrable functions. If $E$ is a Banach space and $X(\mu)$ is a Banach function space over the finite measure space $(\Omega, \Sigma, \mu),$ the K\"othe-Bochner space $X(\mu,E)$ is the Banach space of all the strongly measurable functions $\Psi: \Omega \to E$ such that the norm function $w \mapsto \| \Psi(w) \|_E $ belongs to $X(\mu).$ The norm in this space is given in this case by
$$
\| \Psi\|_{X(\mu,E)} = \Big\| \, \| \Psi(w) \|_E \, \Big\|_{X(\mu)}.
$$
Particular cases of these spaces are the Bochner spaces of $p-$integrable functions for $1 \le p <\infty,$ that are given when $X(\mu)=L^p(\mu).$
In this case, the space $L^p(\mu, E)$ can be represented as  the completion of the  tensor product $L^p(\mu) \widehat \otimes_{\Delta_p} E,$ where $\Delta_p$ is the norm for the tensor product inherited from this identification. 

%We refer to \cite{defa,DiUhl} for spaces of vector-valued integrable functions and tensor product representations.

The space $\mathcal M ( X(\mu),Y(\mu) )$ is the space of multiplication operators from $X(\mu)$ to $Y(\mu)$ with the usual operator norm (see \cite{caldel,mali}). We identify each of these operators with the function that defines it, although we sometimes use the notation $M_g$ for the operator defined by the function $g$ to distinguish it from the function $g$ itself.   If the measure $\mu$ is fixed in the context, we will write  $\mathcal M ( X,Y)$ for this space for simplicity.

We refer to \cite{cobzas} for main issues on Lipschitz operators, to \cite{lind,ok} for definitions and results on Banach function spaces, to \cite{defa,DieJarTon} for general issues on factorization in spaces of linear operators and normed tensor products, and to \cite{DiUhl} for (vector-valued) strongly measurable functions.

\section{Main reference: the linear case}

Let $(\Omega,\Sigma,\mu)$ be a finite measure space, and $X(\mu)$ and $Y(\mu)$ two Banach function spaces. Consider the space of measurable functions $\mathcal M(X(\mu), Y(\mu))$, that define multiplication operators $M_h:X(\mu) \to Y(\mu)$ given by  $M_h(f)(w)=h(w) \cdot f(w),$  $f \in X(\mu).$ Recall that, depending on $X(\mu)$ and $Y(\mu),$ that space could only contain the $0$ function. We will see that the adaptation of this notion to the Lipschitz case fits the concept of lattice Lipschitz operator, that can be more specified for the concrete case of operators between Banach function spaces.

\begin{definition}
	\label{def:lattice_lips}
	We say that a (non-linear)  operator between Banach function spaces $T:X(\mu) \to Y(\mu)$ is lattice Lipschitz if there exists a real-valued non-negative function $K(\cdot)$ such that for every pair of functions $f,g \in X(\mu)$ and $w \in \Omega,$
	$$
	\big| T(f)(w) - T(g)(w) \big| \le K(w) \, \big| f(w)- g(w) \big| \quad \mu-a.e. 
	$$
	%where $K(w) \in M(X(\mu), Y(\mu)).$
	In case $K(\cdot)$ is $\mu$-measurable, the condition can be expressed as $|T(f) - T(g)| \leq K |f-g|$ with the pointwise order on $L^0(\mu)$.
	Extending the classical terminology for constants appearing in Lipschitz inequalities, the function $K$ will be called a bound or associated function for $T$.
	We will call also lattice Lipschitz operators  to maps $T: X(\mu) \to L^0(\mu)$.
\end{definition}

%\textcolor{red}{ QUITAR???
%Let us remark that the definition above has to be understood in the context of the spaces of measurable functions. Although, as usual, we talk about a pointwise domination, it must be taken into account that the elements involved are defined $\mu-$a.e.
%Therefore, if we write carefully this definition, we have to write: \textit{for every set  of functions $\widehat f \in [f]_\mu,$ $\widehat g \in [g]_\mu,$ $\widehat{ T(f)} \in [T(f)]_\mu $ and $\widehat{ T(g)} \in [T(g)]_\mu $, the inequality
%\begin{equation*}
%	\label{eq:llip_rep}
%	\big| \widehat{ T(f)}(w) - \widehat{ T(g)}(w) \big| \le K(w) \, \big| \widehat f(w)- \widehat{ g}(w) \big|
%\end{equation*}
%holds $\mu-$a.e.}
%A standard argument involving almost everywhere definitions show easily that the condition can be weakened in the following sense.
%For any $f,g \in X(\mu)$, there exist representatives $\widehat{T(f)}, \widehat{T(g)}, \widehat{f}, \widehat{g}$ of the corresponding classes such that \eqref{eq:llip_rep} is satisfied for any $w \in \Omega$. }
%
%\textcolor{red}{ QUITAR???
%\begin{remark} 
%	\label{Tsub}
%	Note that every lattice Lipschitz operator $T$ can be written as a sum of an integrable function $h$ and a lattice Lipschitz operator $T_0$ such that $T_0(0)=0$ and having the same bound function as $T$.
%	Indeed, define $h = T(0) \in Y(\mu)$ and $T_0 = T - h$.
%	Moreover, in this case $|T_0(f)(w)|= |T_0(f)(w) -T_0(0)(w) | \le K(w) \, |f(w)-0| = K(w)|f(w)|$ for any $f \in X(\mu)$ and $w \in \Omega \ \mu-$a.e. 
%\end{remark}
%}

We start by a basic property, which gives a relation between lattice Lipschitz operators and the linear ones.  A related result, but given in a different framework, can be found in \cite[Lemma 1.1]{appLib} and the comments after it.

\begin{proposition}
	\label{propo:llips_basic_X}
	Let $(\Omega, \Sigma, \mu)$ be a measure space and $X(\mu), Y(\mu)$ be two Banach function spaces (or, eventually, $Y(\mu) = L^0(\mu)$).
	Consider a lattice Lispchitz $T : X(\mu) \to Y(\mu)$ operator that satisfies that $T(0) = 0.$ 
	Then, for every $f,g \in X(\mu)$ and disjoint sets $A,B \in \Sigma$,
	\begin{itemize}
		\item[(i)] $T(f \chi_A) = T(f) \chi_A$ $\mu-$a.e., and so
		\item[(ii)]  $T(f \chi_A + g \chi_B)= T(f \chi_A) + T(g \chi_B)$ $\mu-$a.e.
	\end{itemize}
\end{proposition}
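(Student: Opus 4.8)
The plan is to derive both statements directly from the defining inequality by testing it against cleverly chosen pairs of functions, exploiting the hypothesis $T(0)=0$. I would treat (i) by splitting $\Omega$ into $A$ and its complement and analyzing each piece separately.

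First, to establish (i) on the set $A$, I would apply the lattice Lipschitz inequality to the pair $f\chi_A$ and $f$. On $A$ we have $(f\chi_A)(w)=f(w)$, so the right-hand side $K(w)\,\big|(f\chi_A)(w)-f(w)\big|$ vanishes, forcing $T(f\chi_A)(w)=T(f)(w)$ for $\mu$-a.e.\ $w\in A$. To handle the complement, I would instead compare $f\chi_A$ with $0$: the inequality together with $T(0)=0$ gives $\big|T(f\chi_A)(w)\big|\le K(w)\,\big|(f\chi_A)(w)\big|$, and since $f\chi_A$ vanishes off $A$, the right-hand side is $0$ on $\Omega\setminus A$, so $T(f\chi_A)(w)=0=T(f)(w)\chi_A(w)$ for $\mu$-a.e.\ $w\notin A$. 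Combining the two regions (the union of the two exceptional null sets is still null) yields $T(f\chi_A)=T(f)\chi_A$ $\mu$-a.e., which is (i).

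For (ii), I would deduce it from (i) rather than redo the computation. Writing $h:=f\chi_A+g\chi_B$ and using that $A,B$ are disjoint, we have $h\chi_A=f\chi_A$ and $h\chi_B=g\chi_B$; applying (i) to $h$ with the sets $A$ and $B$ gives $T(f\chi_A)=T(h)\chi_A$ and $T(g\chi_B)=T(h)\chi_B$, whence
$$
T(f\chi_A)+T(g\chi_B)=T(h)\chi_A+T(h)\chi_B=T(h)\,\chi_{A\cup B}.
$$
It then remains to see that $T(h)$ vanishes off $A\cup B$: since $h=0$ there, comparing $h$ with $0$ in the defining inequality and using $T(0)=0$ gives $T(h)(w)=0$ for $\mu$-a.e.\ $w\in\Omega\setminus(A\cup B)$, so $T(h)=T(h)\chi_{A\cup B}$, which closes the argument.

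There is no deep obstacle here; the whole proof is a matter of selecting the correct comparison functions ($f$ and $0$ for (i); $0$ for the leftover region in (ii)) and invoking $T(0)=0$. The only point requiring a little care is the bookkeeping with the $\mu$-null exceptional sets: each application of the inequality is valid only almost everywhere, so I would make sure the finitely many null sets arising from the different comparisons are collected into a single null set before asserting the pointwise identities globally.
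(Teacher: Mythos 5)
Your proof is correct and follows essentially the same route as the paper: part (i) via the two comparisons ($f$ versus $f\chi_A$, and $f\chi_A$ versus $0$ using $T(0)=0$, with the null sets collected at the end), and part (ii) deduced from (i) applied to $h=f\chi_A+g\chi_B$ with the sets $A$ and $B$. The only cosmetic difference is that where you show $T(h)$ vanishes off $A\cup B$ by a direct comparison of $h$ with $0$, the paper gets the same fact by applying (i) once more with the set $A\cup B$.
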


\begin{proof}
	Let $f$ and $A$ be as in the statement.
	Since $f \chi_A \in X(\mu)$, there exists a null set $N$ such that
	\begin{equation*}
		| T(f) - T(f \chi_A) |(w) \leq K(w) \cdot | f - f \chi_A |(w), \quad w \in \Omega \setminus N,
	\end{equation*}
	and another null set $N'$ that $ | T(f \chi_A) |(w) \leq K(w) | f \chi_A |(w) $
	for each $w \in \Omega \setminus N'$.
	Let $w \in \Omega \setminus (N \cup N')$.
	If $w \not\in A$,
	\begin{equation*}
		| T(f) \chi_A - T(f \chi_A) |(w) = | 0 - T(f \chi_A)(w) | \leq K(w) \cdot | f(w) | \chi_A (w) = 0.
	\end{equation*}
	If $w \in A$,
	\begin{align*}
		| T(f) \chi_A - T(f \chi_A) |(w)
		& = | T(f)(w) - T(f \chi_A)(w) | \\
		& \leq K(w) | f(w) - (f \chi_A) (w) | = 0.
	\end{align*}
	Then, $T(f) \chi_A = T(f \chi_A)$  $\mu-$a.e. This gives (i). For (ii), just note that using (i) we get $\mu-$a.e.
	\begin{align*}
		T(f \chi_A + g \chi_B)
		& = T \big( (f \chi_A + g \chi_B) (\chi_A + \chi_B) \big)
		= T(f \chi_A + g \chi_B) (\chi_A + \chi_B) \\
		& = T(f \chi_A +  g \chi_B) \chi_A+ T(f \chi_A +  g \chi_B) \chi_B \\
		& = T \big( (f \chi_A + g \chi_B) \chi_A \big) + T \big( (f \chi_A + g \chi_B) \chi_B \big)
		= T(f \chi_A) + T(g \chi_B).
	\end{align*}
\end{proof}

We start by giving the linear reference, that is, the characterization of the lattice Lipschitz maps for the simplest case when the map is linear.
Of course it is essentially known, but it will help to find what we can expect for the non-linear case. 

\begin{proposition}
	\label{proplin}
	Let $T:X(\mu) \to Y(\mu)$ be a {\bf linear} continuous map. Then it is a lattice Lipschitz operator if and only if there is a function $h \in \mathcal M(X(\mu),Y(\mu))$ such that $T=M_h.$
	In this case, $K(w) = |h(w)|$ is a bound function for $T$. 
%\textcolor{blue}{EN REALIDAD NO HACE FALTA PEDIR QUE SEA CONTINUA EN EL ENUNCIONADO, SOLO LINEAL; DARIA QUE ES LATICE LIPSCHITZ SI Y SOLO SI VIENE REPRESENTADAD POR UN MULTIPLICADOR, Y POR LO TANTO TODA LATTICE lIPSCHITZ ES CONTINUA}
\end{proposition}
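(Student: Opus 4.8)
The plan is to treat the two implications separately, the converse being essentially a one-line computation and the direct implication carrying the real content. For the sufficiency, if $T=M_h$ with $h\in\mathcal M(X(\mu),Y(\mu))$, then for any $f,g\in X(\mu)$ one has $|T(f)(w)-T(g)(w)|=|h(w)|\,|f(w)-g(w)|$ pointwise $\mu$-a.e., so $T$ is lattice Lipschitz and $K(w)=|h(w)|$ is a bound function; this computation simultaneously yields the final bound-function assertion.

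For the necessity, suppose $T$ is linear, continuous and lattice Lipschitz with bound function $K(\cdot)$. Since $T$ is linear, $T(0)=0$, so Proposition~\ref{propo:llips_basic_X} is available. The natural candidate for the representing function is $h:=T(\chi_\Omega)$, which lies in $Y(\mu)$ because $\chi_\Omega\in X(\mu)$. First I would record that $T$ acts as multiplication by $h$ on simple functions: for $A\in\Sigma$, part (i) of Proposition~\ref{propo:llips_basic_X} applied to $\chi_\Omega$ gives $T(\chi_A)=T(\chi_\Omega\chi_A)=T(\chi_\Omega)\chi_A=h\chi_A$, and linearity then yields $T(s)=hs$ for every simple function $s=\sum_{i=1}^n\lambda_i\chi_{A_i}$.

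The crux is to upgrade this from simple functions to an arbitrary $f\in X(\mu)$. The tempting route is to invoke norm-density of simple functions together with continuity of $T$, but simple functions need not be norm-dense in a general Banach function space, so I would instead exploit the \emph{pointwise} character of the lattice Lipschitz inequality. Fix $f\in X(\mu)$ and pick simple functions $s_n$ with $|s_n|\le|f|$ and $s_n\to f$ pointwise $\mu$-a.e. (always possible for measurable functions). Applying the defining inequality to each pair $(f,s_n)$ and using $T(s_n)=hs_n$ gives, off a $\mu$-null set, the estimate $|T(f)(w)-h(w)s_n(w)|\le K(w)\,|f(w)-s_n(w)|$ for every $n$. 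Since $K(\cdot)$ is real-valued (hence finite a.e.) and $s_n(w)\to f(w)$, the right-hand side tends to $0$ while $h(w)s_n(w)\to h(w)f(w)$, for a.e.\ $w$; collecting the countably many exceptional null sets, I conclude $T(f)=hf$ $\mu$-a.e. I expect this to be the main obstacle, precisely because one must sidestep any norm-density hypothesis and argue pointwise, with the bookkeeping of the countable union of null sets being the point that needs care.

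Finally, from $T(f)=hf$ for every $f\in X(\mu)$ it follows that $hf\in Y(\mu)$ with $\|hf\|_{Y}=\|T(f)\|_{Y}\le\|T\|\,\|f\|_{X}$, so multiplication by $h$ is a bounded operator from $X(\mu)$ into $Y(\mu)$; that is, $h\in\mathcal M(X(\mu),Y(\mu))$ and $T=M_h$. The identification $K(w)=|h(w)|$ of the bound function is then inherited from the sufficiency computation.
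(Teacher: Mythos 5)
Your proof is correct, but the heart of it runs along a genuinely different line from the paper's. Both arguments begin identically: set $h:=T(\chi_\Omega)\in Y(\mu)$ and use Proposition~\ref{propo:llips_basic_X}(i) together with linearity to get $T(s)=hs$ for simple $s$, and the converse direction is the same one-line computation giving $K(w)=|h(w)|$. The divergence is in passing from simple functions to arbitrary $f\in X(\mu)$. The paper runs a Radon--Nikodym-style integration argument: it introduces the set function $\nu(A)=\int_\Omega T(\chi_A)\,d\mu$, reduces without loss of generality to $h\ge 0$ and $f\ge 0$, approximates $f$ by an increasing sequence of simple functions, and identifies $T(f)$ with $hf$ by showing $\int_B T(f)\,d\mu=\int_B hf\,d\mu$ for every $B\in\Sigma$ via the Monotone Convergence Theorem; this exploits the standing finiteness of $\mu$ (so that $Y(\mu)\subset L^1(\mu)$ and all the integrals make sense). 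You instead stay entirely at the pointwise level: you apply the lattice Lipschitz inequality to the pairs $(f,s_n)$, use finiteness a.e.\ of $K$ and pointwise convergence $s_n\to f$, and collect the countably many exceptional null sets. Your route is more elementary and self-contained --- it needs no positivity reductions, no monotone convergence, and no embedding of $Y(\mu)$ into $L^1(\mu)$, so it would survive without the finite-measure hypothesis; it also makes explicit the step (equating $\int_B T(f)\,d\mu$ with $\lim_n\int_B T(f_n)\,d\mu$) that the paper's write-up passes over quickly, since some use of the Lipschitz inequality is needed there too. What the paper's integration argument buys is deliberate: the authors state they record it because the measure-theoretic ideas (the set function $\nu$, integration over arbitrary $B\in\Sigma$) are reused in the nonlinear representation results later in the article. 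One small bookkeeping remark on your version: you should note that the approximating simple functions $s_n$ do lie in $X(\mu)$ so that the lattice Lipschitz inequality applies to the pair $(f,s_n)$; this is automatic here (every simple function is in $X(\mu)$ because $\chi_\Omega\in X(\mu)$ and the space is an ideal, or alternatively from your domination $|s_n|\le|f|$), but it is worth saying.
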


\begin{proof}
	A well-known Radon-Nikodym Theorem argument proves the result; let us write the detailed proof for the aim of completeness, since some of the ideas in it will be used for the non-linear case. 
	
	Suppose first that $T$  is lattice Lipschitz.
	Assume without loss of generality that the measure $\mu$ is finite.
	Since $T$ is linear, $T(0)=0$ and we can use the lattice Lipschitz order inequality for single functions, and not necessarily for differences of functions. All the inequalities below are defined for concrete functions of the equivalence classes of the functions involved and has to be understood $\mu-$a.e., even if this is not explicitly written.   Consider the set function $\nu:\Sigma \to \mathbb R$ given by $A \mapsto \nu(A):=\int_\Omega T(\chi_A) d \mu.$ Note that $T(\chi_\Omega)  = h \in Y(\mu) \subset L^1(\mu),$ since $\mu$ is finite, and so $\nu(A) = \int_A T(\chi_\Omega)$ for all $A \in \Sigma$ by Proposition  \ref{propo:llips_basic_X}.

	%Note that for every $A \in \Sigma$ we have  $|T(\chi_A)|(w) \le K(w) \chi_A(w)$ $\mu-$a.e.
	
%	1)
%	%Note first that $K(\cdot)$ is in particular an integrable function (by assumption, $\mathcal M(X(\mu),Y(\mu)) \subset L^1(\mu)$).
%	% In that case, we do not know that $K \in \mathcal M(X,Y)$ (!)
%	For every sequence of disjoint sets $(A_n)_{n=1}^\infty,$ by Proposition \ref{propo:llips_basic_X}
%	\begin{equation*}
%		\big| \lim_m \nu( \cup_{n=m}^\infty A_n) \big|
%		= \left| \lim_m \int_\Omega T(\chi_{\cup_{n=m}^\infty A_n}) d \mu \right| 
%		\le \lim_m \int_\Omega T(\chi_\Omega) \, \chi_{\cup_{n=m}^\infty A_n} (w) d \mu
%		= 0,
%	\end{equation*}
%	since 
%
%$T(\chi_\Omega) \in Y(\mu) \subset L^1(\mu)$
%	Then, $\nu$ is a finite (countably additive) measure such that is absolutely continuous with respect to $\mu$.
%	By the Radon-Nikodym Theorem we have that there is an integrable function $h$ such that
%	\begin{equation*}
%		\nu(A)= \int_A h \, d \mu.
%	\end{equation*}
	
	1) 
	As $h$ is a $\mu$-measurable function, we assume that is non-negative.
	Otherwise consider it as a difference of two positive functions and repeat the reasoning for the difference of two integrals in the above equation.
	Let $ f \in X(\mu)$ be a simple function and take $B \in \Sigma.$
	Again by Proposition \ref{propo:llips_basic_X} and the linearity of $T$, we get
	\begin{equation*}
		\int_B T(f) d \mu = \int_\Omega T(f) \chi_B d \mu = \int_\Omega T(f \chi_B) d \mu= \int_\Omega h \, f \chi_B  \, d \mu= \int_B h \, f \, d \mu.
	\end{equation*}
	As a consequence, $T(f) = h \cdot f$
	
	2)
	Fix now $f \in X(\mu)$ and assume without loss of generality that $f \geq 0$.
	%Note now that for every $f \in X(\mu),$ $\int_\Omega T(f) d\mu= \int_\Omega f d \nu.$
	%Indeed, since $T$ is linear, the equality is clearly true for every simple function of $X(\mu) \subset L^1(\mu).$ But simple functions are dense in $L^1(\nu);$ suppose without loss of generality that $f \ge 0,$ and
	Take an increasing sequence of simple function such that $f_n \uparrow f$ pointwise and observe that, by the non negativity of $h$, $T(f_n) = h \cdot f_n \uparrow h \cdot f = T(f_n)$.
	Then, by the Monotone Convergence Theorem and the previous paragraph we get that 
	\begin{equation*}
		\int_B T(f) \, d\mu = \lim_n \int_B T(f_n) \, d\mu = \lim_n \int_B f_n \, d\nu = \int_B f \, d\nu,
	\end{equation*}
	for any $B \in \Sigma$. Then, $T(f) = h \cdot f = M_h(f)$
	Moreover, since it is defined for every $f \in X(\mu),$ we get that $h \in \mathcal M(X(\mu),Y(\mu)).$
	
	For the converse implication, given a multiplication operator $M_h \in \mathcal M(X(\mu), Y(\mu)),$ we only need to consider the following  direct  inequalities, for $f,g \in X(\mu);$
	\begin{equation*}
		\big| M_h(f)- M_h(g) \big|(w) = | M_h(f-g)|(w) = |h(w)(f-g)(w)| \le  |h(w) | \, \big| f(w)-g(w) \big|,
	\end{equation*}
	and so the result holds for $K(w)=|h(w)|$ $\mu-$a.e.

	%	*) % Is a consequence of a previous Proposition.
	%	Now we claim that, for every $A \in \Sigma,$ we have that  $T(f \chi_A)= T(f) \cdot \chi_A$ $\mu-$a.e.  In order to see this, consider the equalities
	%	$$
	%	T(f) \cdot \chi_A = T( f \chi_A + f \chi_{A^c} ) \cdot \chi_A= T( f \chi_A) \cdot \chi_A + T(f \chi_{A^c} ) \cdot \chi_A.
	%	$$
	%	Note that $T(f \chi_A) \cdot \chi_A = T(f \chi_A);$ indeed, it is obvious, since by the inequality of the definition of lattice Lipschitz map we have 
	%	$| T(f \chi_A) |(w) \le K(w) \cdot \chi_A(w),$ the ($\mu-$a.e. defined) support of $T(f \chi_A)$ is included in $A.$ On the other hand, we have that
	%	$$
	%	|T(f \chi_{A^c} ) \cdot \chi_A | \le |f | \,  \chi_{A^c}   \cdot \chi_A =0 \,\,\,\, \mu-a.e.  
	%	$$
	%	Therefore, we get the desired equality.
\end{proof}

\section{Pointwise diagonal Lipschitz operators}

Following the idea that suggests the case of multiplication operators, we establish in this section what would be the Lipschitz version of a multiplication operator. Let us
show the arguments to find it.
Before starting to explain our ideas, let us recall some fundamental definitions and results. 

%%%%%%

We will find the fundamental functions to represent this class of operators as Bochner integrable functions.
Consider the vector space of all the real-valued functions of one real variable $\mathcal{RV}(\mathbb R).$
Let $\mathcal F(\mathbb R) \subset \mathcal{RV}(\mathbb R)$ be a Banach space and take a vector valued function $\Phi: \Omega \to \mathcal F(\mathbb R)$ (for example a strongly measurable function).
Let $Z$ be a Banach space of (classes of a.e. equal) real functions; we are mainly (but not only) thinking on $Z=Y(\mu),$ a Banach function space over $\mu.$
We want to show that we can define an operator $T: X(\mu) \to Z$ using $\Phi$ and when it satisfies Definition \ref{def:lattice_lips}.

\begin{example}
	\label{ex:first_const}
	Consider $\mathcal F(\mathbb R) = Lip(\mathbb R),$ the space of Lipschitz real functions of one real variable. Consider the constant function $\Phi: \Omega \to Lip(\mathbb R)$  given by
	$\Phi(w)(v):=\frac{1}{1+|v|}$ for all $w \in \Omega.$
	Let $X(\mu)$ be any Banach function space in a finite measure space and define $T: X(\mu) \to X(\mu)$ as
	$$
	T(f)(w)=  \big( \Phi(w) \circ f \big) (w) = \Phi(w) \big(  f(w) \big) = \frac{1}{1+|f(w)|},  \quad f \in X(\mu), \,\,\, w \in \Omega \,\,\, \mu-a.e.
	$$
	Note that, for all $f,g \in X(\mu)$ and $w \in \Omega,$
	\begin{align*}
		\big| T(f)(w) - T(g)(w) \big|
		& = \Big| \frac{1}{1+|f(w)|} - \frac{1}{1+|g(w)|} \Big|
		= \Big| \frac{|g(w)|- |f(w)|}{(1+|f(w)|)(1+|g(w)|)} \Big| \\
		& \le \frac{ \big| g(w)- f(w)  \big| }{(1+|f(w)|)(1+|g(w)|)}
		\le \big| g(w)- f(w)  \big|.
	\end{align*}
	Therefore, it is lattice Lipschitz, and $K(w)$ can be defined to be the constant function $\mathbf 1.$ The function $g$ in Remark \ref{Tsub} is in this case the constant function $\mathbf 1,$ since $T(0)= \mathbf 1.$
\end{example}

\begin{remark}
	Let us show first that this covers \textbf{the linear case} explained in Proposition \ref{proplin}.  If  $\mathcal F(\mathbb R) = L(\mathbb R) \subseteq Lip_0(\mathbb R),$ the space of linear real functions of one real variable, of course this space can be identified with the space of the real numbers $\mathbb R$ by means of the map $\mathbb R \ni r \mapsto f_r(\cdot)= r \cdot (\cdot) \in L(\mathbb R).$
	
	Take now a function $h \in \mathcal M(X(\mu),Y(\mu))$ and consider the Bochner integrable function
	\begin{align*}
		\Phi_h : \Omega & \to L(\mathbb R) \\
		w & \mapsto \Phi_h(w)(\cdot) = f_{h(w)} (\cdot)= h(w) \cdot (\cdot).
	\end{align*}
	That is, it takes at each $w$ the value provided by the multiplication of the real number $h(w)$ and the variable. Then the operator can be given by the composition
	\begin{equation*}
		T(f)(w) = \big( \Phi_h(w) \circ f\big)(w)
		= h(w) \cdot f(w) = M_h(f)(w),  \quad f \in X(\mu),
	\end{equation*}
	that is well-defined $\mu-$a.e. Recall that, by Proposition \ref{proplin}, the class of operators above that are also linear coincides with the class of linear lattice Lipschitz operators, that is, those satisfying that for all $f \in X(\mu), \ \big| T(f)(w) \big| \le K(w) \, |f(w)|,$ for a certain function $K(w)$ belonging to $\mathcal M(X(\mu),Y(\mu)).$
\end{remark}

Let us restrict our attention to vector valued functions for the case that $\mathcal F(\mathbb R) = Lip_0(\mathbb R)$ with its Lipschitz constant as norm.
The following results illustrate the construction of lattice Lipschitz operators from a vector-valued function, as in the cases above.
The conditions of the map $\Phi: \Omega \to Lip_0(\mathbb R)$ in terms of measurability needed for having a well defined lattice Lipschitz operator will be discussed hereafter.

Consider for the moment $\Phi: \Omega \to Lip_0(\mathbb R)$ a strongly mesurable function, as an almost everywhere limit of a sequence of simple functions defined by $\psi(w) = \sum_{i=1}^n \chi_{A_i}(w) \phi_i.$  As for the notation, and to facilitate the understanding of the arguments we will also write these functions as $\psi(w) = \sum_{i=1}^n  \phi_i \chi_{A_i}(w).$ 
It is relevant to mention that such a function $\psi$ defines an operator $T_\psi: X(\mu) \to Y(\mu)$ acting as a superposition operator, that is, by the natural formula
\begin{equation}
	\label{eq:simple_T_natural}
	T_\psi(f)(w)= \sum_{i=1}^n \chi_{A_i}(w) \phi_i \big( f(w) \big), \quad w \in \Omega
\end{equation}
whenever for each $i=1,...,n,$ the functions $w \mapsto \phi_i(f(w))$ belong to $Y(\mu).$ This formula has to be understood as defined $\mu-$a.e.; indeed, if $f$ and $g$ are measurable functions that are equal but in a $\mu-$null set $A \in \Sigma$ and its equivalence class belongs to $X(\mu)$, we clearly have that $T_\psi(f)(w)= T_\psi(g)(w)$ for $w \in \Omega \setminus A,$ and so $T_\psi(f) \in Y(\mu).$
In other words, the map $T_\psi$ is defined independently of the representative we take for the function $f.$
%We will show later that this is also true for the case of strongly measurable functions.

%{\color{blue}
%	(Can we remove it?)
%	Let $\Phi: \Omega \to Lip_0(\mathbb R)$ be a strongly measurable function and let $Z(\mu)$ be a Banach function space.
%	By abuse of notation, we will write $ \Phi \in Z(\mu, Lip_0(\mathbb R))$ in case the equivalence class of all the vector valued functions that are equal to $\Phi,$ belongs to the K\"othe-Bochner space $ Z(\mu, Lip_0(\mathbb R)).$

\begin{lemma}
	\label{simplef}
Using the notation above, 
	every simple function $\psi = \sum_{i=1}^n \phi_i \chi_{A_i} \in X(\mu, Lip_0(\mathbb R))$ gives a lattice Lipschitz operator
	$T_\psi: X(\mu) \to X(\mu)$ with bound function
	\begin{equation*}
		K_{T_\psi}=\sum_{i=1}^n \chi_{A_i}  Lip(\phi_i) \in L^\infty(\mu).
	\end{equation*}
\end{lemma}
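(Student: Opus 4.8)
The plan is to verify three things in turn: that $T_\psi$ maps $X(\mu)$ into itself, that it satisfies the lattice Lipschitz inequality, and that the stated function is an admissible bound. The backbone of the argument is the disjointness of the sets $A_i$, which forces the sum defining $T_\psi$ in \eqref{eq:simple_T_natural} to collapse to a single summand at $\mu$-almost every point.

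First I would establish well-definedness. Since each $\phi_i \in Lip_0(\mathbb{R})$ satisfies $\phi_i(0) = 0$, for any $f \in X(\mu)$ and every $w$ one has the pointwise estimate $|\phi_i(f(w))| = |\phi_i(f(w)) - \phi_i(0)| \le Lip(\phi_i)\,|f(w)|$. Because $X(\mu)$ is an ideal in $L^0(\mu)$ and $Lip(\phi_i)\,|f| \in X(\mu)$, each composition $w \mapsto \phi_i(f(w))$ belongs to $X(\mu)$; this is exactly the membership condition required to make the formula \eqref{eq:simple_T_natural} meaningful with $Y(\mu) = X(\mu)$. The function $T_\psi(f)$ is measurable, being a finite combination of characteristic functions with compositions of $f$ by continuous maps, and, being dominated by an element of $X(\mu)$, it lies in $X(\mu)$.

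Next I would prove the inequality. Fixing $f, g \in X(\mu)$ and a point $w$, disjointness of the $A_i$ ensures that at most one indicator $\chi_{A_i}(w)$ is nonzero. If $w \in A_j$ then $T_\psi(f)(w) - T_\psi(g)(w) = \phi_j(f(w)) - \phi_j(g(w))$, whose modulus is bounded by $Lip(\phi_j)\,|f(w) - g(w)|$ via the Lipschitz property of $\phi_j$; if $w$ lies in no $A_i$ both values vanish and the difference is $0$. In either case the modulus is at most $\big(\sum_{i=1}^n \chi_{A_i}(w)\,Lip(\phi_i)\big)\,|f(w) - g(w)|$, which is precisely $K_{T_\psi}(w)\,|f(w) - g(w)|$. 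This confirms the condition of Definition \ref{def:lattice_lips} with bound function $K_{T_\psi}$.

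Finally, $K_{T_\psi} = \sum_{i=1}^n \chi_{A_i}\,Lip(\phi_i)$ is a nonnegative simple function bounded by $\max_i Lip(\phi_i)$, hence belongs to $L^\infty(\mu)$, as claimed. There is no genuine obstacle in this lemma: the only point demanding care is the well-definedness step, where one must invoke the ideal property of $X(\mu)$ together with the essential boundedness of $K_{T_\psi}$ to guarantee that $T_\psi(f)$ remains inside $X(\mu)$ rather than merely being measurable. The disjointness of the $A_i$ is what reduces the bound to the clean single-index form stated in the lemma.
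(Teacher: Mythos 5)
Your proposal is correct and follows essentially the same route as the paper: well-definedness via the estimate $|\phi_i(f(w))| \le Lip(\phi_i)\,|f(w)|$ (using $\phi_i(0)=0$) combined with the ideal property of $X(\mu)$, followed by a direct pointwise verification of the lattice Lipschitz inequality with bound $K_{T_\psi}$. The only cosmetic difference is that you exploit disjointness to collapse the sum to a single term at each point, whereas the paper runs the triangle inequality over the whole sum; the two computations are identical in substance.
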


\begin{proof}
	Note first that the composition of a measurable function $f$ with a Lipschitz (and so continuous) map $\phi \in Lip_0(\mathbb R)$ gives also a measurable function, and
	$$
	|\phi \circ f(w) | = | \phi \circ f(w)  - \phi \circ 0 (w)|  = |\phi(f(w))- \phi(0) | \le Lip(\phi) \, |f(w)|,
	$$
	where the inequality is defined $\mu-$a.e.,	and so $\phi \circ f \in X(\mu)$ too due to the ideal property of the Banach function spaces.
	
	Let us compute the value of the Lipschitz bound function for  the operator $T_\psi:X(\mu) \to X(\mu)$ associated to a simple function 
	$\psi = \sum_{i=1}^n \phi_i \chi_{A_i},$ $\phi_i \in Lip_0(\mathbb R).$
	For every $f , g \in X(\mu),$ we have
	\begin{align*}
		| T_\psi(f)(w) -T_\psi(g)(w) |
		& = \left| \sum_{i=1}^n \chi_{A_i}(w) \cdot \big( \phi_i (f(w))- \phi_i (g(w)) \big) \right| \\
		& \le \sum_{i=1}^n \chi_{A_i}(w) \big| \phi_i (f(w))- \phi_i (g(w)) \big| \\
		& \le \sum_{i=1}^n \chi_{A_i}(w) Lip(\phi_i) \big| f(w)- g(w) \big| \\
		& =  \left( \sum_{i=1}^n \chi_{A_i}(w) Lip(\phi_i) \right) \cdot \big| f(w)- g(w) \big| .
	\end{align*}
	
%\textcolor{red}{	Moreover, since for every real numbers $r,s$ we find $f,g \in X(\mu)$ such that $r=f(w)$ and  $s=g(w)$ $\mu$-a.e. with support in any non-null set, {\color{red} SI, PONEMOS a.e. , con eso basta creo defined in a measurable $A$?} we have that the bound function $K_{T_\psi}$ cannot be improved.} PARA PROBARLO, BASTA COGER UNA f que esté por debajo de la que se supone que es la mejor, y comprobar que usando funciones caracteristicas por numeros, se llega a una contradiccion. 
\end{proof}

Note that the next result gives a constructive procedure for finding well-defined superposition operators as limits of simple functions. This is not the usual way of introducing such superposition maps (\cite{appel,appel2}), and opens the door to a new point of view for understanding superposition operators as vector-valued integrable functions.

\begin{proposition}
	\label{lemma:rep}
	Let $X(\mu)$ and $Y(\mu)$ be  Banach function spaces and let  $\Phi:\Omega \to  Lip_0(\mathbb R)$ be a strongly measurable function.
	Then 
	\begin{itemize}
		\item[(i)] the composition (superposition)  formula
		\begin{equation*}
			T_\Phi(f)(w) = \big( \Phi(w) \circ f \big) (w), \quad w \in \Omega
		\end{equation*}
		gives a well-defined map $T_\Phi:X(\mu) \to L^0(\mu),$ and
		
		\item[(ii)] if $(\psi_n)_n$ is a sequence of simple functions converging $\mu-$a.e. to $\Phi$ with bound functions $K_{T_{\psi_n}}$ such that
		\begin{equation*}
			\sup_n K_{T_{\psi_n}} \in \mathcal M(X(\mu),Y(\mu)),
		\end{equation*}
		then $T_\Phi$ gives a lattice Lipschitz map $T_\Phi:X(\mu) \to Y(\mu).$
	\end{itemize}
\end{proposition}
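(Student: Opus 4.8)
The plan is to construct $T_\Phi$ as a pointwise $\mu$-a.e.\ limit of the simple-function operators $T_{\psi_n}$ from Lemma~\ref{simplef}, and then transfer both the measurability and the lattice Lipschitz estimate through this limit. Throughout I would fix a sequence $(\psi_n)_n$ of simple functions with $\psi_n(w) \to \Phi(w)$ in $Lip_0(\mathbb R)$ for $w$ outside a $\mu$-null set $N_0$, as provided by strong measurability.

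For (i), the crucial point is to upgrade the $Lip_0(\mathbb R)$-norm convergence to pointwise convergence of the scalar compositions. Since every element of $Lip_0(\mathbb R)$ vanishes at $0$, for each $w \notin N_0$ and each (fixed representative of) $f \in X(\mu)$ one has
$$
\big| \psi_n(w)(f(w)) - \Phi(w)(f(w)) \big| \le Lip\big( \psi_n(w) - \Phi(w) \big) \, |f(w)| = \big\| \psi_n(w) - \Phi(w) \big\|_{Lip_0(\mathbb R)} \, |f(w)|,
$$
which tends to $0$ at every $w \notin N_0$ where $|f(w)| < \infty$, that is, $\mu$-almost everywhere. Hence $T_{\psi_n}(f) \to T_\Phi(f)$ $\mu$-a.e. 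Each $T_{\psi_n}(f)$ is measurable, being a finite sum of products of characteristic functions with compositions $\phi_i \circ f$ of the continuous maps $\phi_i$ with the measurable function $f$; as an a.e.\ limit of measurable functions, $T_\Phi(f)$ is therefore measurable, so $T_\Phi(f) \in L^0(\mu)$. Independence of the representative of $f$ is immediate, since $f = g$ $\mu$-a.e.\ forces $\Phi(w)(f(w)) = \Phi(w)(g(w))$ $\mu$-a.e.

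For (ii), I would set $K := \sup_n K_{T_{\psi_n}}$, which is measurable as a countable supremum and lies in $\mathcal M(X(\mu),Y(\mu))$ by hypothesis. Lemma~\ref{simplef} gives, for each $n$ and each pair $f,g \in X(\mu)$, the inequality $|T_{\psi_n}(f) - T_{\psi_n}(g)| \le K_{T_{\psi_n}} |f-g| \le K\,|f-g|$ off a null set; discarding the countable union of these null sets and letting $n \to \infty$ using the a.e.\ convergence from (i) yields
$$
\big| T_\Phi(f)(w) - T_\Phi(g)(w) \big| \le K(w)\, \big| f(w) - g(w) \big| \quad \mu\text{-a.e.},
$$
so $T_\Phi$ is lattice Lipschitz with bound function $K$. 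To see that $T_\Phi$ lands in $Y(\mu)$, note that $\Phi(w) \in Lip_0(\mathbb R)$ gives $T_\Phi(0) = 0$, so taking $g = 0$ above produces $|T_\Phi(f)| \le K\,|f|$ $\mu$-a.e.; as $K \in \mathcal M(X(\mu),Y(\mu))$ and $|f| \in X(\mu)$, the product $K\,|f| = M_K(|f|)$ belongs to $Y(\mu)$, whence $T_\Phi(f) \in Y(\mu)$ by the ideal property of $Y(\mu)$.

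The main obstacle I anticipate is the measurability step in (i): everything hinges on converting the $Lip_0(\mathbb R)$-norm convergence $\psi_n(w) \to \Phi(w)$ into genuine pointwise a.e.\ convergence of the scalar compositions $w \mapsto \Phi(w)(f(w))$, and this is precisely where the vanishing-at-zero normalisation of $Lip_0(\mathbb R)$ is indispensable, since without it the Lipschitz seminorm would not control the pointwise values. Some care is also needed to keep track of the countable family of null sets (one from the strong measurability of $\Phi$, one per approximating inequality) so that all estimates hold simultaneously on a single conull set.
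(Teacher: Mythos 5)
Your proof is correct and follows essentially the same route as the paper's: both exploit the vanishing-at-zero normalisation of $Lip_0(\mathbb R)$ to turn the Lipschitz-norm convergence $\psi_n(w)\to\Phi(w)$ into pointwise a.e.\ convergence of the compositions, and then pass the bound $\sup_n K_{T_{\psi_n}}$ through the a.e.\ limit. The only (cosmetic) difference is that you first establish the lattice Lipschitz inequality and deduce $T_\Phi(f)\in Y(\mu)$ by setting $g=0$, whereas the paper proves well-definedness first via a triangle inequality and then the Lipschitz estimate; the two orderings are interchangeable.
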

%{\color{red} Does (ii) imply $\Phi \in \mathcal M$? Or we prefer to use that notation as in the lemma?}

\begin{proof}
	(i) We only need to prove that the composition with a certain function $f$ is well-defined as a representative of a $\mu-$a.e. class of functions of $Y(\mu).$ Since $\Phi$ is strongly measurable, we get that there is a $\mu-$null set $B$ and a sequence of simple functions $\psi_n$
	such that  for every $w \in \Omega \setminus B$
	$$
	\Phi(w) = \lim_n \psi_n(w).
	$$
	Since $\psi_n(w)$ are simple functions with values in $Lip_0(\mathbb R),$ by Lemma \ref{simplef} the maps $T_{\psi_n}: X(\mu) \to X(\mu) \subset L^0(\mu)$ are well-defined.  We have that for $ 0 \in X(\mu)$ and every $n,$ $\psi_n(w)(0)=0$ for all $w \in \Omega,$ and so $\Phi(w)(0)=0$ $\mu-$a.e.  Thus, for every $f \in X(\mu)$ and almost all $w \in \Omega,$ 
	\begin{align*}
		\big| \Phi(w)(f(w)) - \psi_n(w)(f(w))  \big|
		& =	\big| \Phi(w)(f(w)) - \psi_n(w)(f(w)) - \Phi(w)(0) - \psi_n(w)(0) \big| \\
		& \le Lip \big( \Phi(w) - \psi_n(w) \big) \, |f(w)-0| \to_n  0.
	\end{align*}
	Therefore, $\lim_n \psi_n(w)(f(w))= \Phi(w)(f(w))$ $\mu-$a.e., and so the operator $T_\Phi$ is defined at least pointwise for all $f \in X(\mu),$ giving a measurable function for each such $f$ that is an a.e. pointwise limit of functions in $X(\mu).$
	
	(ii)  For a.e.  every $w \in \Omega$ and $n \in \mathbb N,$
	\begin{align*}
		| \Phi(w) ( f(w)) |
		& \le \big| \Phi(w)(f(w)) - \psi_n(w)(f(w)) \big| + |\psi_n(w)(f(w)) | \\
		& \le Lip \big( \Phi(w) - \psi_n(w) \big) \, |f(w)| + \sup_n K_{T_{\psi_n}}(w) \cdot |f(w)|.
	\end{align*}
	Since $Lip \big( \Phi(w) - \psi_n(w) \big) \to_n 0$  and  $\sup_n K_{T_{\psi_n}}(w) \cdot |f(w)| \in Y(\mu)$ by hypothesis, we have that
	$| \Phi(w) ( f(w)) | $ is a measurable function a.e. bounded by a function in $Y(\mu).$ By the ideal property of the Banach function space $Y(\mu),$ we get that the operator is well-defined.
	
	On the other hand, if $f,g \in X(\mu)$ we have that for a.e. $w,$
	\begin{align*}
		| \Phi(w) & (f(w)) - \Phi(w)(g(w)) | \\
		& \le \big| \Phi(w)(f(w)) - \psi_n(w)(f(w)) - \Phi(w)(g(w)) + \psi_n(w)(g(w)) \big| \\
		& \qquad + | \psi_n(w)(f(w)) - \psi_n(w)(g(w)) | \\
		& \le Lip \big( \Phi(w) - \psi_n(w) \big) \cdot |f(w) - g(w)| + \sup_n K_{T_{\psi_n}}(w)  \cdot |f(w)-g(w)|, 
	\end{align*}
	and since this holds for every $n,$ we obtain
	\begin{equation*}
		| T_\Phi (f(w)) - T_\Phi (g(w)) | \le \sup_n K_{T_{\psi_n}}(w) \cdot |f(w)-g(w)|
	\end{equation*}
	and  so $T_\Phi$ is a lattice Lipschitz operator for which we can get a  bound function that defines a multiplication operator from $X(\mu)$ to $Y(\mu),$ since
	\begin{equation*}
		K_\Phi \le \sup_n K_{T_{\psi_n}} \in \mathcal M(X(\mu),Y(\mu)).
	\end{equation*}	
\end{proof}

The operator $T(f)(w) = \Phi(w)(f(w))$ used in Example \ref{ex:first_const} and Proposition \ref{lemma:rep} will be called superposition operator  an will be denoted by $T_\Phi(f) = \Phi \circ  f$ by abuse of notation.

%{\color{red} Relate with superposition operators!!! (Roger) AÑADIR CITAS??? relacionarlos mas}

%{\color{blue}  REMOVE?
%	We will call \textit{pointwise diagonal Lipschitz operators} to maps as the ones given by such a vector valued function $\Phi,$ for the case that $\mathcal F(\mathbb R)$ is the space $Lip_0(\mathbb R)$ of real valued Lipschitz functions of one real variable that are $0$ for $v=0.$
%}
%

\vspace{0.3cm}

It is relevant to mention that we can define lattice Lipschitz operators for which it is not possible to provide  a representation by means of a vector-valued function $\Phi.$ We show an example of this situation below.

\begin{example}
Consider the Lebesgue measure space $([0,1],\Sigma,\mu)$ and 
take a pointwise bounded non-measurable function $h: [0,1] \to \mathbb R$ that is positive and with pointwise bound $K>0$ (for example, the function given by 1 plus the characteristic function of a non-measurable set). Consider the space of essentially bounded measurable functions $X(\mu) = L^\infty(\mu)$ and consider the multiplication operator $M_h: X(\mu) \to \mathcal F(\mathbb R),$ where  $M_h(f)= h \cdot f,$ and $\mathcal F(\mathbb R)$ is the space of all classes of real functions with real variable that are equal $\mu-$a.e. Note that this map is well-defined when defined from a Banach function space $X(\mu)$ over $\mu.$
%AHORA YA ESTSA BIEN CREO REESCRIBIR LO ANTERIOR, OCOMO ESTA AHORA ESTA MAL, PERO AÑADIENDO ANTES LO DE DESPUES ESTA BIEN
%Indeed, take two measurable functions $f_1$ and $f_2$ that are equal $\mu-a.e.$ There is a measurable set $A \in \Sigma$ such that $f_1(w)=f_2(w)$ for every $w \in [0,1] \setminus A.$ Therefore, $h \cdot f_1(w) = h \cdot f_2(w)$ for all $w$ except in $A.$

This means that we can consider $M_h$ defined from $X(\mu)$ to  the space $\mathcal F(\mathbb R)_\mu$ of all the equivalence classes of functions that are equal $\mu-$a.e.  A Banach lattice structure can be given for a subspace $Z$ of this space by dividing every function in it by $h;$ for example, the (classes of) functions that satisfy  that $ \mathcal F(\mathbb R)_\mu \ni v \mapsto v/h \in X(\mu)$ defines a Banach lattice with $\mu-$a.e. order and the norm given by this map.  $M_h:X(\mu) \to Z$  is clearly well-defined.
On the other hand, for every $w$ and every pair of measurable functions $f,g,$ 
\begin{equation*}
	| M_h(f)(w) - M_h(g)(w)|
	%= | h(w) \cdot ( f(w)- g(w) ) | \\
	=  h(w)  \cdot |f(w)- g(w) | 	\le K \, |f(w)- g(w)|
\end{equation*}
and so it is  a lattice Lipschitz map. The vector valued function $\Phi$  that represents the lattice Lipschitz operator is $w \mapsto h(w)(\cdot),$ where the real number $h(w)$ is considered as a constant that defines a Lipschitz map in $Lip_0(\mathbb R),$ which cannot be, by construction, measurable: in other case, the scalar function  $w \mapsto \|\Phi(w)\|_{Lip(\mathbb R)} = h(w)$ would be measurable.
\end{example}

Let us explain an example in which a non-measurable Lipschitz valued function is constructed using a typical cardinality argument.
In order to do it, let us recall that the spaces of Lipschitz functions ares not separable in general, as a consequence of the fact that $\mathbb R$ is not totally bounded. 

\begin{remark}
	\label{remsep}
	
	The space of Lipschitz bounded funrction $(Lip_b(\mathbb R), \| \cdot\|_\infty)$ and the space of Lipschitz functions that vanish at 0 $(Lip_0(\mathbb R), Lip(\cdot))$ are not separable.
	To see this, consider the set of natural numbers $\mathbb N$ as a subset of $\mathbb R.$ For every subset $S \subset \mathbb N \setminus \{0\},$ define the function $f_S: \mathbb R \to \mathbb R$ given by 
	\begin{equation*}
		r \mapsto \min \Big\{\frac{1}{4}, \min\{|r-s|: s \in S \cup \{0\} \} \Big\} = \min \Big\{\frac{1}{4}, |r|, d(r, S) \} \Big\},
	\end{equation*}
	which are $1$-Lispchitz functions bounded by $1/4$.
	
	It is immediate to notice that $\| f_S - f_D \|_\infty \ge 1/4,$ which implies that these functions are separated, what, 
	together with the fact that there are as many functions as subsets of $\mathbb N \setminus \{0\},$ gives that the family $\mathcal H:=\{ f_S: S \subset \mathbb N\}$ cannot the 
	approximated by any countable family of functions. So, $(Lip_b(\mathbb R), \| \cdot\|_\infty)$ is not separable. 
	
	Note also that the same construction can be used to prove that $(Lip_0(\mathbb R), Lip(\cdot) )$ is not separable. Indeed, for all the functions $f_S$ we have that $f_S(0)=0$ and $Lip(f_S-f_D) = 1$ for all $S,D \subset \mathbb N \setminus 
	\{0\}$ such that $S \neq D.$
	
\end{remark}

Let us show some examples of non-strongly measurable functions that still define lattice Lipschitz operators. We will show examples in (non $\sigma-$finite) discrete measure spaces as well as in finite measure spaces (Lebesgue measure). 

\begin{example} 
	
	Consider  the measure space $(B_{Lip_b(\mathbb R)},\mathcal P (B_{Lip_b(\mathbb R)}), c)$, where $c$ is the counting (extended) measure.
	Since $Lip_b(\mathbb R)$ is not separable, take a non-countable set $N$ in it that cannot be approximated by any countable set (for example, as in Remark \ref{remsep}).
	
	Define the vector valued function $\Phi: B_{Lip_b(\mathbb R)} \to B_{Lip_b(\mathbb R)}$ by $\Phi = id \cdot \chi_N$
	that is,
	\begin{equation*}
		\Phi (\tau) = \sum_{h \in N} h \cdot \chi_{\{h\}} (\tau) =  \left\{
		\begin{array}{ll}
			\tau     & \mathrm{if\ }  \tau  \in N \\
			0 & \mathrm{if\ }  \tau \notin N \, .
		\end{array}
		\right. 
	\end{equation*}
	It cannot be strongly measurable, since if there is a sequence $(s_n)_{n=1}^\infty$ of simple functions such that $s_n = \sum_{i=1}^{m_n} \eta_n^i \chi_{A_n^i} \to \Phi$ pointwise, then for all $\tau \in N$, $s_n(\tau) = \eta_n^{i_n} \to \Phi(\tau) = \tau$.
	
	Note, however, that we can define a lattice Lipschitz operator 
	\begin{equation*}
		T_\Phi:  \ell^\infty(B_{Lip_b(\mathbb R)} )  \to \ell^\infty( B_{Lip_b(\mathbb R)} )
	\end{equation*}
	by pointwise composition, as we have done in the representation lemma \ref{lemma:rep}. Indeed, 
	\begin{equation*}
		T_\Phi \big( (r_\tau)_{\tau \in B_{Lip_b(\mathbb R)}} \big) = (\Phi(\tau)(r_\tau))_{\tau \in B_{Lip_b(\mathbb R)}},
	\end{equation*}
	Therefore, for any $r, t \in \ell^\infty(B_{Lip_b(\mathbb R)} )$ and $\eta \in B_{Lip_b(\mathbb R)}$,
	\begin{equation*}
		\Big| T_\Phi(r)(\eta) - T_\Phi(t)(\eta) \Big|
		=   \Big| \Phi(\eta)(r(\eta)) - \Phi(\eta)(t(\eta)) \Big| 
		\le Lip(\Phi(\eta)) \big| r(\eta)- t(\eta) \big|.
	\end{equation*}
	Thus, $T_\Phi$ is lattice Lipschitz, with bound function $K = Lip(\Phi(\cdot)) = \chi_{N}.$
	Note that the same construction can be done for $Lip_0(\mathbb R)$ instead of $Lip_b(\mathbb R).$
	
	This example allows a general version, taking into account Pettis measurability theorem. Note first that the only condition required in the
	example above with the counting measure is the existence of a bijection relating any element of the measure space with an element of the set $\mathcal H$ defined by the functions $f_S.$  If $(\Omega, \Sigma,\mu)$ is a measure space, any function $\Phi$  that satisfies that $\Phi(\Omega \setminus A) \cap  \mathcal H$ is uncountable for any $\mu-$null set $A \in \Sigma$ gives a similar example. For instance, consider the measure space $( \mathcal P(\mathbb N \setminus \{0\}), \mathcal P \big(  \mathcal P(\mathbb N\setminus \{0\}) \big), \nu),$ where $\nu$ is given by
	$$
	\nu (\{ S\})= \sum_{n \in S} \frac{1}{2^n}, \quad \text{and} \quad \nu(\Lambda)= \sum_{S \in \Lambda} \nu(\{S\}).
	$$
	The function $\Phi:  \mathcal P(\mathbb N \setminus \{0\}) \to Lip_0(\mathbb R)$ given by $\Phi(S)= f_S$ is well-defined, and obviously non-strongly measurable, due to the non-countable nature of  $ \mathcal P(\mathbb N \setminus \{0\}).$
\end{example}

Next results shows where it could be ensured that the range of $T$ is a space of measurable functions in therms of $\Phi$.

\begin{lemma}
	\label{lem:def_Phi}
	Let $(\Omega, \Sigma, \mu)$ be a measure space  and $T: X(\mu) \to L^0(\mu)$ a lattice Lipschitz operator.
	Then, there exists $\Phi: \mathbb R \times \Omega \to \mathbb R$ such that
	\begin{equation*}
		\Phi(\cdot)(\lambda) = T(\lambda \chi_\Omega), \quad \lambda \in \mathbb R,
	\end{equation*}
	(that is, $\Phi(\cdot)(\lambda)$ is a particular representative of $T(\lambda \chi_\Omega)$),
	and for any $w \in \Omega,$ $\Phi(w): \mathbb R \to \mathbb R$ is a $K(w)$-Lipschitz function.
\end{lemma}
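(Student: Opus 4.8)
The plan is to build $\Phi$ by first pinning down its values along a countable dense set of $\lambda$'s, where the a.e.\ inequalities coming from the lattice Lipschitz condition can be gathered into a single null set, and then to extend in the $\lambda$ variable by uniform continuity. The obstruction to a naive definition is that each $T(\lambda\chi_\Omega)$ is only an equivalence class in $L^0(\mu)$ and that the lattice Lipschitz inequality for a given pair of functions holds only off a null set depending on that pair; since $\mathbb R$ is uncountable we cannot simply take a union over all $\lambda$ at once.

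First I would fix, for each rational $q \in \mathbb Q$, a pointwise representative $g_q$ of $T(q\chi_\Omega)$. For each ordered pair $(q,q') \in \mathbb Q^2$, applying Definition \ref{def:lattice_lips} to $f = q\chi_\Omega$ and $g = q'\chi_\Omega$ (noting that $(f-g)(w) = q - q'$ for every $w \in \Omega$) yields a $\mu$-null set $N_{q,q'}$ with
$$
|g_q(w) - g_{q'}(w)| \le K(w)\,|q - q'|, \quad w \in \Omega \setminus N_{q,q'}.
$$
Since there are only countably many such pairs, $N := \bigcup_{q,q'} N_{q,q'}$ is $\mu$-null, and for every fixed $w \in \Omega \setminus N$ the map $q \mapsto g_q(w)$ is $K(w)$-Lipschitz on $\mathbb Q$. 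For $w \in \Omega \setminus N$ this map is uniformly continuous on the dense subset $\mathbb Q \subset \mathbb R$, hence it admits a unique continuous extension $\Phi(w):\mathbb R \to \mathbb R$, which inherits the bound and is therefore $K(w)$-Lipschitz. For $w \in N$ I would simply set $\Phi(w) \equiv 0$, which is $K(w)$-Lipschitz since $K(w) \ge 0$. This produces a function $\Phi$ with the required pointwise Lipschitz property built into the construction.

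Finally I would verify that $\Phi(\cdot)(\lambda)$ represents $T(\lambda\chi_\Omega)$ for every real $\lambda$. For rational $\lambda$ this holds off $N$ by construction. For irrational $\lambda$, I would fix a representative $h_\lambda$ of $T(\lambda\chi_\Omega)$ together with a sequence $q_n \to \lambda$ in $\mathbb Q$; applying the lattice Lipschitz inequality to the pairs $(q_n\chi_\Omega,\lambda\chi_\Omega)$ gives a null set $M$ outside of which $|g_{q_n}(w) - h_\lambda(w)| \le K(w)\,|q_n - \lambda| \to 0$, so $g_{q_n}(w) \to h_\lambda(w)$ for $w \notin M$. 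On the other hand, by continuity of $\Phi(w)$ one has $g_{q_n}(w) = \Phi(w)(q_n) \to \Phi(w)(\lambda)$ for $w \notin N$. Hence $\Phi(w)(\lambda) = h_\lambda(w)$ off the null set $N \cup M$, so $\Phi(\cdot)(\lambda)$ is indeed a representative of $T(\lambda\chi_\Omega)$.

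The main delicacy is precisely this coherence between the $\lambda$-wise a.e.\ identifications and the single null set $N$ used to manufacture the Lipschitz extension: the lattice Lipschitz hypothesis only controls each pair modulo a null set, and the whole construction hinges on restricting to a countable dense set of scalars so that these null sets may be unioned, and then transferring the uniform bound to all of $\mathbb R$ by uniform continuity.
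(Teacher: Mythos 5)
Your proof is correct, and it follows the same overall skeleton as the paper's: fix representatives of $T(q\chi_\Omega)$ for $q$ in the countable dense set $\mathbb Q$, absorb the countably many exceptional sets coming from the pairwise lattice Lipschitz inequalities into a single null set, extend in the $\lambda$-variable by continuity/Lipschitz extension, and finally identify $\Phi(\cdot)(\lambda)$ with $T(\lambda\chi_\Omega)$ for irrational $\lambda$ via a rational approximating sequence (this last step is verbatim the paper's). Where you genuinely diverge is in the treatment of the exceptional set, which is where all the technical content lies. The paper insists on producing representatives $\Psi(\lambda_n)$, $\lambda_n \in \mathbb Q$, for which the Lipschitz inequality holds at \emph{every} $w \in \Omega$: it runs an induction over an enumeration of $\mathbb Q$ and, on the bad null set $N_m = N_m^1 \cup \dots \cup N_m^{m-1}$, redefines the new representative by copying the value $\Psi(\lambda_j)(w)$ of an earlier one. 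You instead keep the original representatives untouched, work only on $\Omega \setminus N$, and set $\Phi(w) \equiv 0$ for $w \in N$; this is legitimate because the lemma only demands that $\Phi(w)$ be $K(w)$-Lipschitz for $w$ in the null set, not that it bear any relation to $T$ there, and $K(w) \geq 0$. Your device is not only simpler but more robust: in the paper's inductive step, copying $\Psi(\lambda_j)(w)$ at $w \in N_m^j$ only yields $|\Psi(\lambda_m)(w)-\Psi(\lambda_n)(w)| \leq K(w)|\lambda_j-\lambda_n|$, which need not be bounded by $K(w)|\lambda_m-\lambda_n|$, so the paper's assertion that the inequality ``clearly'' holds also on $N_m$ requires repair (e.g., defining $\Psi(\lambda_m)$ on $N_m$ by a McShane-type formula from the previously constructed values, the same tool the paper invokes to pass from $\mathbb Q$ to $\mathbb R$). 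The only price of your shortcut is cosmetic: on $N$ your $\Phi(w)$ ignores $T$ entirely, whereas a corrected inductive construction keeps all representatives coherent at every point of $\Omega$ --- extra strength that neither the statement of the lemma nor its later use in Theorem \ref{theo:carac_Phi} actually needs.
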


\begin{proof}
	Denote by $\mathfrak M$ the set of all real $\mu$-measurable functions defined in all $\Omega$ without the usual $\mu$-almost everywhere equivalence.
	That is, all possible   representatives of classes of functions in $L^0(\mu)$.
	We claim that there exists $\Psi: \mathbb Q \to \mathfrak M$ such that $\Psi(\lambda)$ is a representative of $T(\lambda \chi_\Omega)$ defined in all $\Omega$ and for any $\lambda, \xi \in \mathbb Q$
	\begin{equation}
		\label{eq:Phi_Lips_Q}
		| \Psi(\lambda)(w) - \Psi(\xi)(w) |
		\leq % K(w) | \lambda \chi_\Omega(w) - \xi \chi_\Omega(w) |
		K(w) | \lambda - \xi | \quad \text{for all} \quad w \in \Omega.
	\end{equation}
	That is, for any $w \in \Omega$, $\Psi(\cdot)(w): \mathbb Q \to \mathbb R$ is a $K(w)$-Lipschitz function.
	
	Let $\mathbb Q = ( \lambda_n )_{n=1}^\infty$, and choose $\Psi(\lambda_1)$ to be any representative of $T(\lambda_1 \chi_\Omega)$ defined in all $\Omega$.
	Assume now that $\Psi$ is defined in $\{ \lambda_n \}_{n=1}^{m-1}$ satisfying the given property.
	Choose $\sigma: \Omega \to \mathbb R$ to be any representative of $T(\lambda_m \chi_\Omega)$.
	By hypothesis, there exists $N_m^n$ null sets such that
	\begin{equation*}
		| \Psi(\lambda_n)(w) - \sigma(w) |
		\leq | \lambda_n \chi_\Omega(w) - \lambda_m \chi_\Omega(w) |
		= K(w) | \lambda_n - \lambda_m |
	\end{equation*}
	for any $w \not\in N_m^n$, $n \leq m$. Let $N_m = N_m^1 \cup N_m^2 \cup \cdots \cup N_m^{n-1}$ and define $\Psi(\lambda_m): \Omega \to \mathbb R$ as
	\begin{equation*}
		\Psi(\lambda_m)(w) =
		\begin{cases}
			\sigma(w) & w \not\in N_m \\
			\Psi(\lambda_1)(w) & w \in N_m^1 \\
			\Psi(\lambda_2)(w) & w \in N_m^2 \setminus N_m^1 \\
			\quad \vdots \\
			\Psi(\lambda_{m-1})(w) & w \in N_m^{m-1} \setminus (N_m^1 \cup \cdots \cup N_m^{m-2}) \\		
		\end{cases}.
	\end{equation*}
	Clearly, \eqref{eq:Phi_Lips_Q} is satisfied for any $\lambda_m, \lambda_n$ in all $\Omega$, also for elements in $N_m$.
	Proceeding inductively, $\Psi$ is correctly defined in $\mathbb Q$.
	
	Define $\Phi(w)(\lambda) = \Psi(\lambda)(w)$ in $\Omega \times \mathbb Q$.
	Then, given any $w \in \Omega$, $\Phi(w)\!\!\restriction_\mathbb Q$ is a $K(w)$-Lipschitz function, so it can be extended to $\mathbb R$ keeping the Lipschitz constant, for example following McShane formula, see \cite{mcs}.
	For $\lambda \in \mathbb R \setminus \mathbb Q$, consider $\tilde T(\lambda \chi_\Omega)(w)$ a representative of $T(\lambda \chi_\Omega)(w)$.
	By the continuity of $\Phi(w)$, if $\mathbb Q \ni \lambda_n \to \lambda$,
	\begin{align*}
		| \tilde T(\lambda \chi_\Omega)(w) - \Phi(w)(\lambda) |
		& = \lim_{n \to \infty} | \tilde T(\lambda \chi_\Omega)(w) - \Phi(w)(\lambda_n) | \\
		& \leq \lim_{n \to \infty} K(w) | \lambda - \lambda_n | = 0,
	\end{align*}
	for almost every $w \in \Omega$.
	Then, $\Phi(\cdot)(\lambda) : \Omega \to \mathbb R$ belongs to $\mathfrak M$ and is a representative of $T(\lambda \chi_\Omega)$.
\end{proof}

Let us recall the construction of a predual of $Lip_0(M)$ for any pointed metric space $M$.
For each $x \in M$, consider the evaluation functional $\delta_x: Lip_0(M) \to \mathbb R$ defined as $\delta_x(f) = f(x)$ as an element in $\big( Lip_0(M) \big)^*$.
Define the so called Lipschitz-free space $\mathfrak F(M) = \overline{\text{span}} \{ \delta_x : \ x \in M \} \subseteq \big( Lip_0(M) \big)^*$ using the dual norm which, in fact, satisfies $\| \delta_x - \delta_y \| = d(x,y)$ for $x, y \in M$.
Then, $\big( \mathfrak F(M) \big)^* = Lip_0(M)$.
This construction is explained in detail in \cite{cobzas}, where other constructions of a predual of $Lip_0(M)$, such as the Arens-Eells space $\AE(M)$, are also studied.

\begin{theorem}
	\label{theo:carac_Phi}
	Let $(\Omega, \Sigma, \mu)$ a measure space and $X(\mu)$ a Banach function space.
	The operator $T: X(\mu) \to L^0(\mu)$ is lattice Lipschitz if and only if there exists $\Phi: \Omega \to Lip_0(\mathbb R)$ such that $T(f)(w) = \Phi(w)(f(w))$ (for almost any $w \in \Omega$) and $p \circ \Phi: \Omega \to \mathbb R$ is $\mu$-measurable for any $p \in \mathfrak F(\mathbb R)$.
\end{theorem}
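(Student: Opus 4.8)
The plan is to prove both implications around the pointwise formula $T(f)(w)=\Phi(w)(f(w))$, reading the clause ``$p\circ\Phi$ is $\mu$-measurable for all $p\in\mathfrak F(\mathbb R)$'' as exactly the weak$^*$-measurability of $\Phi$ when $Lip_0(\mathbb R)$ is identified with the dual $\big(\mathfrak F(\mathbb R)\big)^*$. Since $\mathfrak F(\mathbb R)=\overline{\text{span}}\{\delta_x:x\in\mathbb R\}$ and every $\Phi(w)$ has finite norm, this condition will be equivalent to measurability of the single maps $w\mapsto\Phi(w)(\lambda)=\delta_\lambda(\Phi(w))$ for each fixed $\lambda$, and that is the form I will actually use.

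For the forward implication I would first invoke Lemma~\ref{lem:def_Phi} to produce, for a lattice Lipschitz $T$, a map $\Phi:\Omega\to Lip_0(\mathbb R)$ defined everywhere on $\Omega$ with each $\Phi(w)$ a $K(w)$-Lipschitz function and with $\Phi(\cdot)(\lambda)$ a representative of $T(\lambda\chi_\Omega)$ for every $\lambda$. The first task is to upgrade the identity $\Phi(\cdot)(\lambda)=T(\lambda\chi_\Omega)$ from constants to all of $X(\mu)$. For a simple function $f=\sum_i\lambda_i\chi_{A_i}$ I would compare $f$ with $\lambda_i\chi_\Omega$ on $A_i$: the lattice Lipschitz inequality forces $|T(f)-T(\lambda_i\chi_\Omega)|(w)\le K(w)\,|f(w)-\lambda_i|=0$ for almost every $w\in A_i$, hence $T(f)(w)=\Phi(w)(\lambda_i)=\Phi(w)(f(w))$ a.e. For general $f\in X(\mu)$ I would take simple $f_n\to f$ pointwise with $|f_n|\le|f|$ (so $f_n\in X(\mu)$ by the ideal property); since $K(w)<\infty$, the lattice inequality gives $T(f_n)(w)\to T(f)(w)$ a.e., while continuity of $\Phi(w)$ gives $\Phi(w)(f_n(w))\to\Phi(w)(f(w))$, yielding the representation. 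For the measurability clause I would test on the generators $\delta_\lambda$: there $\delta_\lambda(\Phi(w))=\Phi(w)(\lambda)=T(\lambda\chi_\Omega)(w)$ is measurable because $T$ takes values in $L^0(\mu)$; measurability for finite combinations $\sum_j c_j\delta_{x_j}$ is then immediate, and for a general $p\in\mathfrak F(\mathbb R)$ I would pass to a pointwise limit, using that for each fixed $w$ one has $|(p_k-p)(\Phi(w))|\le\|p_k-p\|\,Lip(\Phi(w))\to0$, so $w\mapsto p(\Phi(w))$ is measurable as an a.e. pointwise limit of measurable functions.

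For the converse, suppose such a $\Phi$ is given. The lattice Lipschitz inequality is immediate from the representation: for $f,g\in X(\mu)$,
\begin{equation*}
	|T(f)(w)-T(g)(w)|=|\Phi(w)(f(w))-\Phi(w)(g(w))|\le Lip(\Phi(w))\,|f(w)-g(w)|,
\end{equation*}
so $K(w)=Lip(\Phi(w))$ is a bound function. The only genuine point to verify is that $T$ really lands in $L^0(\mu)$, i.e. that $w\mapsto\Phi(w)(f(w))$ is measurable for every $f\in X(\mu)$. This is where the hypothesis is used: taking $p=\delta_\lambda$ gives measurability of $w\mapsto\Phi(w)(\lambda)$ for each fixed $\lambda$, so $w\mapsto\Phi(w)(f(w))=\sum_i\chi_{A_i}(w)\,\Phi(w)(\lambda_i)$ is measurable for simple $f=\sum_i\lambda_i\chi_{A_i}$ (the value off $\bigcup_i A_i$ being $\Phi(w)(0)=0$ as $\Phi(w)\in Lip_0(\mathbb R)$), and the general case follows by approximating $f$ pointwise by simple functions and using the continuity of each $\Phi(w)$ to pass to the pointwise limit.

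I expect the main obstacle to be precisely this measurability propagation, which appears in both directions: the weak$^*$-measurability hypothesis only controls $w\mapsto p(\Phi(w))$ for fixed functionals $p$, whereas the representation requires the map $w\mapsto\Phi(w)(f(w))$ in which the evaluation point $f(w)$ itself varies with $w$. The resolution is the same device on both sides — reduce to the evaluation functionals $\delta_\lambda$ (which are total in $\mathfrak F(\mathbb R)$, even along a countable dense set of $\lambda$), establish measurability for simple $f$, and then exploit the Lipschitz continuity of each $\Phi(w)$ to recover the general case as an almost-everywhere pointwise limit of measurable functions; the finiteness of $K(w)=Lip(\Phi(w))$ for every $w$ is what legitimizes these limits.
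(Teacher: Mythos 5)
Your proposal is correct and takes essentially the same route as the paper: both directions rest on Lemma~\ref{lem:def_Phi} to build $\Phi$ from the values $T(\lambda\chi_\Omega)$, reduce the measurability clause to the evaluation functionals $\delta_\lambda$ (extending to all $p\in\mathfrak F(\mathbb R)$ by linear combinations and norm limits, controlled by $Lip(\Phi(w))$), and in the converse pass from simple $f$ to general $f$ by pointwise approximation and the continuity of each $\Phi(w)$. If anything, your forward direction is more complete than the paper's, which after invoking Lemma~\ref{lem:def_Phi} never explicitly verifies the identity $T(f)(w)=\Phi(w)(f(w))$ for non-constant $f\in X(\mu)$ --- the step you supply by comparing $f$ with $\lambda_i\chi_\Omega$ on each $A_i$ and then taking pointwise limits.
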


\begin{proof}
	Assume that $T$ is lattice Lispchitz.
	Define $\Phi(w)(\lambda)$ as the concrete representative of $T(\lambda \chi_\Omega)$ evaluated on $w$ given by Lemma \ref{lem:def_Phi}.
	For any $\lambda \in \mathbb R$, $\delta_\lambda \circ \Phi = \Phi(\cdot)(\lambda)$ is measurable since is a representative of $T(\lambda \chi_\Omega)$.
	By taking linear combinations and limits of function in $L^0(\mu)$, we obtain that $p \circ \Phi$ is $\mu$-measurable for any $p \in \mathfrak F(\mathbb R)$.
	
	For the reciprocal, let us see first that $T(f)(w) = \Phi(w)(f(w))$ is well defined from $X(\mu)$ to $L^0(\mu)$.
	Let $f = \sum_{i=1}^n \lambda_i \chi_{A_i} \in X(\mu)$ a simple function. Then,
	\begin{equation*}
		T(f)(w) = \Phi(w) \left( \sum_{i=1}^n \lambda_i \chi_{A_i}(w) \right)
		= \sum_{i=1}^n \Phi(w)(\lambda_i) \chi_{A_i}(w) \in L^0(\mu),
	\end{equation*}
	since $w \mapsto \Phi(w)(\lambda_i) = \langle \Phi(w), \delta_{\lambda_i} \rangle$ is $\mu$-measurable for all $i$.

	%As {\color{red} the simple functions are dense in $X$}, $T(X) \subseteq L^0(\mu)$.
	%{\color{red} Otherwise, can we say that the equation is true for any simple function in $L^0(\mu)$, take pointwise limits there and they say that $T(X)$ is a particular case?}

Since every limit of simple functions is measurable, we have that $T(f) \in L^0(\mu)$ for all $f \in X(\mu).$
	Moreover, if $f = g$ $\mu$-a.e. in $\Omega$, clearly $T(f)(w) = \Phi(w)(f(w)) = \Phi(w)(g(w)) = T(f)(w)$ $\mu$-a.e., so $T$ is well defined.
	To see that $T$ satisfies the lattice Lipschitz inequality, let $f, g \in X$.
	For any choose of the representatives,
	\begin{equation*}
		| T(f)(w) - T(g)(w) | = | \Phi(w)(f(w)) - \Phi(w)(g(w)) | \leq Lip \ \Phi(w) \cdot | f(w) - g(w) |,
	\end{equation*}
	so we can select $K(w) = Lip \ \Phi(w)$.
%{\color{red} ¿Can we say that $K \in L^0(\mu)$? No, maybe bounded by one.}
\end{proof}

\begin{corollary}
	\label{coromeasu}
	If $\Phi: \Omega \to Lip_0(\mathbb R)$ is weakly $\mu$-measurable (or strongly $\mu$-measurable) and $X(\mu)$ is any Banach function space, then $T: X \to L^0(\mu)$ defined as $T(f)(w) = \Phi(w)(f(w))$ is a lattice Lipschitz operator.
\end{corollary}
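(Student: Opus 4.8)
The plan is to deduce this directly from the reverse implication of Theorem \ref{theo:carac_Phi}. That theorem characterizes the superposition operator $T(f)(w) = \Phi(w)(f(w))$ as lattice Lipschitz precisely when $p \circ \Phi : \Omega \to \mathbb R$ is $\mu$-measurable for every $p \in \mathfrak F(\mathbb R)$. So the whole task reduces to verifying this single measurability condition from the hypothesis that $\Phi$ is weakly (or strongly) measurable; once it holds, Theorem \ref{theo:carac_Phi} supplies both the well-definedness of $T$ as a map $X(\mu) \to L^0(\mu)$ and the lattice Lipschitz inequality with bound $K(w) = Lip\, \Phi(w)$.

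First I would treat the weakly measurable case. Recall from the construction of the Lipschitz-free space that $\mathfrak F(\mathbb R) = \overline{\text{span}}\{\delta_\lambda : \lambda \in \mathbb R\}$ is a closed subspace of the dual $\big(Lip_0(\mathbb R)\big)^*$. By the definition of weak measurability recalled in the introduction, $\Phi$ being weakly $\mu$-measurable means that $x^* \circ \Phi$ is $\mu$-measurable for every $x^* \in \big(Lip_0(\mathbb R)\big)^*$. Since each $p \in \mathfrak F(\mathbb R)$ is in particular an element of $\big(Lip_0(\mathbb R)\big)^*$, we obtain at once that $p \circ \Phi$ is $\mu$-measurable for every $p \in \mathfrak F(\mathbb R)$. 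This is exactly the hypothesis required by the sufficiency part of Theorem \ref{theo:carac_Phi}, and applying it finishes the weakly measurable case.

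For the strongly measurable case I would simply invoke the Pettis measurability theorem, recalled in the introduction: a strongly measurable function is in particular weakly measurable (it is weakly measurable and essentially separably valued). Hence this case is subsumed by the previous one and no separate argument is needed.

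I do not expect a genuine obstacle here; the proof is essentially a restriction of the admissible test functionals from the full dual $\big(Lip_0(\mathbb R)\big)^*$ down to the predual $\mathfrak F(\mathbb R)$. The only point demanding care is the direction of the duality: one must use that $\mathfrak F(\mathbb R)$ embeds inside $\big(Lip_0(\mathbb R)\big)^*$, so that weak measurability, which is tested against all of $\big(Lip_0(\mathbb R)\big)^*$, is genuinely stronger than the condition tested only against $\mathfrak F(\mathbb R)$. (Conversely, the condition in Theorem \ref{theo:carac_Phi} is weaker than full weak measurability, which is why that theorem, and not this corollary, gives the sharp characterization.)
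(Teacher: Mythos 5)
Your proposal is correct and follows essentially the same route as the paper: both reduce the statement to the sufficiency direction of Theorem \ref{theo:carac_Phi} by observing that every $p \in \mathfrak F(\mathbb R)$ acts as a bounded functional on $Lip_0(\mathbb R)$ (the paper phrases this via the bidual identification $p \in (\mathfrak F(\mathbb R))^{**} = Lip_0(\mathbb R)^*$, you via the inclusion $\mathfrak F(\mathbb R) \subseteq (Lip_0(\mathbb R))^*$ built into the construction, which is the same fact), so weak measurability yields measurability of $p \circ \Phi$, and strong measurability is handled by noting it implies weak measurability.
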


\begin{proof}
	Assume that $\Phi$ is weakly measurable (since strong measurability implies it).
	Given any $p \in \mathfrak F(\mathbb R)$, as $p \in \big( \mathfrak F(\mathbb R) \big)^{**} = Lip_0(\mathbb R)^*$, $p \circ \Phi$ is $\mu$-measurable, so by Theorem \ref{theo:carac_Phi}, $T$ is a lattice Lipschitz operator from $X$ to $L^0(\mu)$.	
\end{proof}

\begin{example}
	Consider the Lebesgue measurable space $([0,1], \Sigma, \mu)$.
	Each real number in $[0,1]$ can be identified with a subset of natural numbers via a binary representation of its digits.
	Since that representation is not always unique, choose the one that is eventually 0 rather that 1.
	Define $\Phi$ as
	\begin{align*}
		\Phi: [0,1] & \to Lip_0(\mathbb R) \\
		w = \sum_{i=1}^\infty c_i \frac{1}{2^i} & \mapsto \left( \lambda \mapsto \min \left\{ \frac{1}{2}, |\lambda|, d(\lambda, \{i \in \mathbb N : c_i = 0 \}) \right\}  \right).
	\end{align*}
	
	Let us see that $T: L^1(\mu) \to L^1(\mu)$ defined by $T(f)(w) = \Phi(w)(f(w))$ is a well defined lattice Lispchitz operator using Theorem \ref{theo:carac_Phi}.
	Let $\lambda \in \mathbb R$ and consider $\delta_\lambda \in \mathfrak F(\mathbb R)$, we claim that $w \mapsto \langle \Phi(w), \delta_\lambda \rangle = \Phi(w)(\lambda)$ is $\mu$-measurable.
	Consider, for simplicity, the function
	\begin{equation*}
		\varphi_\lambda(w) = \min \left( \left\{ \frac{1}{2} \right\} \cup \big\{ | \lambda - i | : \ c_i = 0, \ i \in \mathbb N \big\} \right),
	\end{equation*}
	where $w = \sum_{i=1}^\infty c_i \frac{1}{2^i}$ is the binary representation of $w$ as before.
	Observe that $\Phi(w)(\lambda) = \min\{ |\lambda|, \varphi_\lambda(w) \}$, so it is enough to see that $\varphi_\lambda: \Omega \to [0,\frac{1}{2}] \subseteq \mathbb R$ is $\mu$-measurable.
	Let $0 < s < \frac{1}{2}$, and consider
	\begin{equation*}
		\varphi_\lambda^{-1} \big( ]-\infty,s[ \big) = \left\{ w = \sum_{i=1}^\infty c_i \frac{1}{2^i} \in \Omega : \ \{0,1\} \, \reflectbox{$\in$} \, c_i \not\to 1, \ \exists c_{i_0} = 0 : | \lambda - i_0 | < s \right\}.
	\end{equation*}
	If $d(\lambda, \mathbb N) \geq s$, $\varphi_\lambda^{-1} \big( ]-\infty,s[ \big) = \emptyset \in \Sigma$.
	Alternatively, pick $i_0$ to be the nearest natural number to $\lambda$, so
	\begin{align*}
		\varphi_\lambda^{-1} \big( ]-\infty,s[ \big) 
		& = \left\{ w = \sum_{i=1}^\infty c_i \dfrac{1}{2^i} \in \Omega : \ \{0,1\} \, \reflectbox{$\in$} \, c_i \not\to 1, \ c_{i_0} = 0 \right\} \\
		& = \bigcup_{j=0}^{2^{i_0-1}-1} \left[ \frac{2j}{2^{i_0}}, \frac{2j+1}{2^{i_0}} \right[ \in \Sigma,
	\end{align*} 
	which proves that $\varphi_\lambda$ is a Lebesgue measurable function.
	This situation, together with the fact that $Lip \ \Phi(w)$ is bounded by 1, shows that $T$ maps $L^1(\mu)$ to $L^1(\mu)$.
		
	If $w, \tau \in [0,1]$ are different, $Lip( \Phi(w) - \Phi(\tau) ) = 1$ (reasoning as in Remark \ref{remsep}).
	Then, $\Phi(\Omega)$ is not separable, so $\Phi$ cannot be approximated by simple functions.
%	{\color{red} Correct?, or let the previous proof (by Pettis measurability theorem)}
	
%	=== Previosu proof of that fact:
%	If $\Psi$ is strongly measurable, by Pettis measurability theorem we have that there is a $\mu-$ null set such that the  restricted range of $\Psi$---the set $\Psi(\Omega \setminus N)$---has to be separable. But in this case, it has to exist a countable set $C =\{g_n: n \in \mathbb N\} \subset Lip_0(\mathbb R)$ such that for every $\varepsilon < 1/8$ and every $f_S \in \Phi(\Omega \setminus N)$ there is a natural number $n_S$ such that  $\|f_S - g_{n_S}\|_{Lip_0(\mathbb R)} < \varepsilon.$ Since $\|f_S -f_{S'} \| > 1/4$ if $S \ne S',$ we get that the only case that can happen is that $\{f_{S_w}: w \in \Omega \setminus N \}$ is countable. But this would mean that
%	$$
%	1= \mu(\Omega)= \mu (\Omega \setminus N) \cup \mu(N)= 0+0=0
%	$$
%	since every countable set is Lebesgue null, which gives a contradiction. 
	
	Observe that any Banach function space $X(\mu)$ can be considered instead of $L^1(\mu)$, having the same result.
\end{example}

\section{Lattice Lipschitz operators that are representable by strongly measurable functions}

We now show a representation theorem for lattice Lipschitz operators on spaces of Banach functions in terms of a vector-valued measurable function with values in $Lip_0(\mathbb R). $
We will see that these operators can in general be written in terms of pointwise diagonal Lipschitz maps, and sometimes a representation using a strongly measurable function is also available. The main technical problem throughout this section is the handling of the $\mu-$null sets appearing in Lipschitz-type inequalities, which have to be properly understood: a $\mu-$a.e. property sometimes concerns the terms involved in different ways. Essentially, these inequalities concern both the real numbers and the elements of $\Omega,$ and exchanging the roles of these variables is a delicate move.

Let us first prove the more general result, which does not explicitly require any measurability property of the fundamental function $\Phi_T.$ Instead, the function is assumed to act only on a special class of subsets of functions of $X(\mu),$ which are those of the type $f=\lambda \chi_\Omega$ for $\lambda \in \mathbb R.$We show that the lattice Lipschitz operators are disjoint, imitating the linear case and focusing on their ``diagonal'' nature.

\begin{proposition}
	\label{th1strong}
	Let $\Phi: \Omega \to Lip_0(\mathbb R)$ be a strongly measurable function such that $\Phi \in \mathcal M(X,Y)(\mu,Lip_0(\mathbb R))$.
	Consider the corresponding operator $T_\Phi: X(\mu) \to L_0(\mu)$, given by the pointwise composition $T_\Phi(f)(w) = \Phi(w)(f(w))$.
	Then, $T_\Phi$ is
	\begin{itemize}
		\item[i)] a well-defined operator $T_\Phi: X(\mu) \to Y(\mu)$,
		\item[ii)] Lipschitz continous, and
		\item[iii)] lattice Lipschitz with the best bound function appearing in the lattice Lipschitz inequality $K(\cdot) = Lip(\Phi(\cdot));$ that is, if there is any other measurable function $h$ such that the lattice Lipschitz inequality holds for $h(\cdot)= K(\cdot),$ then $Lip(\Phi(\cdot)) \le h(\cdot)$ $\mu-$a.e. 
	\end{itemize}
	
	Moreover, if there is another function $\Psi: \Omega \to Lip_0(\mathbb R)$ such that $\Phi=\Psi$ $\mu-$a.e., then $T_\Phi = T_\Psi$ and the bound functions $w \mapsto Lip(\Phi(w))$ and $w \mapsto Lip(\Psi(w))$
	are  $\mu-$a.e. equal. 
\end{proposition}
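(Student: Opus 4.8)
The plan is to read off (i), (ii) and the first half of (iii) directly from the pointwise composition inequality together with the hypothesis $Lip(\Phi(\cdot)) \in \mathcal M(X,Y)$, and then to settle the optimality of the bound and the a.e.-uniqueness by a countable density argument over $\mathbb Q$.

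First I would note that $w \mapsto Lip(\Phi(w)) = \| \Phi(w) \|_{Lip_0(\mathbb R)}$ is measurable (as $\Phi$ is strongly measurable) and, by hypothesis, belongs to $\mathcal M(X,Y)$. Since each $\Phi(w) \in Lip_0(\mathbb R)$ satisfies $\Phi(w)(0) = 0$, for every $f,g \in X(\mu)$ one has, $\mu$-a.e.,
\[
| T_\Phi(f)(w) - T_\Phi(g)(w) | = | \Phi(w)(f(w)) - \Phi(w)(g(w)) | \le Lip(\Phi(w)) \, | f(w) - g(w) |,
\]
and in particular $| T_\Phi(f)(w) | \le Lip(\Phi(w)) \, | f(w) |$. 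By Proposition \ref{lemma:rep}(i), $T_\Phi(f)$ is a well-defined class in $L^0(\mu)$; the last estimate shows $| T_\Phi(f) | \le M_{Lip(\Phi(\cdot))}(|f|) \in Y(\mu)$, so the ideal property of $Y(\mu)$ forces $T_\Phi(f) \in Y(\mu)$, proving (i). Applying the same ideal property to the first displayed inequality gives $\| T_\Phi(f) - T_\Phi(g) \|_Y \le \| M_{Lip(\Phi(\cdot))} \| \, \| f - g \|_X$, which is (ii); and the displayed inequality itself exhibits $K(\cdot) = Lip(\Phi(\cdot))$ as an admissible lattice Lipschitz bound, giving the first half of (iii).

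The only real obstacle is the optimality in (iii), where one must pass from ``for each fixed pair of scalars, $\mu$-a.e. $w$'' to ``$\mu$-a.e. $w$, for all pairs''. Suppose a measurable $h$ satisfies the lattice Lipschitz inequality. Testing it on the constant functions $f = \lambda \chi_\Omega$, $g = \xi \chi_\Omega$, for which $T_\Phi(f)(w) = \Phi(w)(\lambda)$ and $T_\Phi(g)(w) = \Phi(w)(\xi)$ hold pointwise, produces for each pair a null set outside which $| \Phi(w)(\lambda) - \Phi(w)(\xi) | \le h(w) | \lambda - \xi |$. I would restrict to $\lambda, \xi \in \mathbb Q$ and discard the countable union $N$ of the corresponding null sets; then for every $w \notin N$ the inequality holds simultaneously for all rational $\lambda \neq \xi$. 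Since each $\Phi(w)$ is continuous, its Lipschitz constant equals the supremum of the difference quotients taken over rational arguments, so passing to this supremum yields $Lip(\Phi(w)) \le h(w)$ for $w \notin N$, i.e. $\mu$-a.e. This is exactly the place where countability of $\mathbb Q$ and continuity of $\Phi(w)$ must be combined, and it is the delicate interchange of the roles of the real and the $\Omega$ variables that the section warns about.

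For the final (``Moreover'') assertion, fix a null set $N_0$ with $\Phi(w) = \Psi(w)$ in $Lip_0(\mathbb R)$ for all $w \notin N_0$. Then $\Psi$ is again strongly measurable and $Lip(\Psi(\cdot)) = Lip(\Phi(\cdot)) \in \mathcal M(X,Y)$ off $N_0$, so $T_\Psi$ satisfies (i)--(iii) as well. For every $f \in X(\mu)$ and every $w \notin N_0$ we have $T_\Phi(f)(w) = \Phi(w)(f(w)) = \Psi(w)(f(w)) = T_\Psi(f)(w)$, whence $T_\Phi(f) = T_\Psi(f)$ in $Y(\mu)$ and therefore $T_\Phi = T_\Psi$; the same null set gives $Lip(\Phi(w)) = Lip(\Psi(w))$ for $w \notin N_0$, so the two bound functions coincide $\mu$-a.e.
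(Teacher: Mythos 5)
Your proof is correct, and for parts (i), (ii), the first half of (iii), and the ``moreover'' assertion it coincides with the paper's argument: the pointwise estimate $|T_\Phi(f)(w)-T_\Phi(g)(w)|\le Lip(\Phi(w))\,|f(w)-g(w)|$, the ideal property of $Y(\mu)$, and the operator norm of the multiplication operator $M_{Lip(\Phi(\cdot))}$. Where you genuinely diverge is the optimality of the bound in (iii). The paper fixes $w$ and $\varepsilon>0$, picks reals $r_1,r_2$ whose difference quotient exceeds $Lip(\Phi(w))-\varepsilon$, tests on the constant functions $r_1\chi_\Omega$, $r_2\chi_\Omega$, and then derives a contradiction with an $h$ that is strictly smaller on a non-null set; but as written this quantifies over all (uncountably many) pairs of reals, while the lattice Lipschitz inequality for $h$ only holds off a null set that depends on the pair tested, so the paper's final step is loose about exactly the null-set interchange that the section's preamble warns is delicate. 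Your version---testing only on rational constants, discarding a single countable union of null sets, and then recovering $Lip(\Phi(w))$ as the supremum of difference quotients over rational arguments by continuity of $\Phi(w)$---closes that gap cleanly, at no extra cost in length. Both arguments share the same underlying mechanism (the values of $T_\Phi$ on constant functions determine $\Phi$ pointwise), so yours is best described as a rigorous refinement of the paper's route rather than a different method.
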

%In other words, if there is another non-negative function $w \mapsto R(w) \in \mathbb R$ satisfying the lattice Lipschitz inequality for $T,$ then $Lip(\Phi(\cdot)) \le R(\cdot)$ $\mu-$a.e. ES ESTO LO QUE NECESITAMOS??????? PORQUE QUE SEA IGUAL CASI POR TODAS PARTES NO IMPLICA QUE CUMPLA LA DESIGUALDAD DE lIPSCHITZ PARA TODAS LAS REPRESENTACIONES DE LAS FUNCIONES

\begin{proof}
	Given $f \in X(\mu)$, by Corollary \ref{coromeasu}, $T(f)$ is a $\mu$-measurable function.
	As $\Phi(\Omega) \subseteq Lip_0(\mathbb R)$, $T(0) = 0$ and so for almost any $w \in \Omega$, $| T(f)(w) | \leq Lip(\Phi(w)) \cdot | f(w) |$.
	Then, since $Lip(\Phi(\cdot)) \in \mathcal M(X(\mu), Y(\mu))$, $Lip(\Phi(\cdot)) \cdot |f(w)| \in Y(\mu)$ and, by the ideal property of $Y(\mu)$, $T(f) \in Y(\mu)$.
	Moreover, given $f, g \in X(\mu)$
	\begin{equation*}
		\| T(f) - T(g) \|_{Y(\mu)} \leq \| Lip(\Phi(\cdot)) \|_{\mathcal M(X(\mu), Y(\mu))} \cdot \| f - g \|_{X(\mu)}.
	\end{equation*}	
	
%	{\color{red} What does ``the best function" means? Not sure about $\mu$-a.e. problematic.}
	Consider now a pair of functions $f, g \in X(\mu)$ and $w \in \Omega$,
	\begin{equation*}
		| T(f)(w) - T(g)(w) |
		= | \Phi(w)( f(w))- \Phi(w)( g(w)) |
		\le Lip(\Phi(w)) \, | f(w)- g(w)|
	\end{equation*}
	and so $T_\Phi$ is lattice Lipschitz with bound function $K(w) = Lip(\Phi(w))$. Note that $\Phi(w)$ (and so  $Lip(\Phi(w))$) is defined $\mu-$a.e.

We \textit{claim} now that for every $w \in \Omega$ and every $\varepsilon >0$ there are functions $f_1,f_2 \in X(\mu)$ such that 
$$
	|T_\Phi(f_1)(w)-T_\Phi(f_2)(w)| >= (Lip(\Phi(w)) - \varepsilon)  \, {|f_1-f_2|(w)}.
$$	
Note that this implies that $Lip(\Phi(w))$ is the best constant that can be written in this inequality.
	Indeed, fix $w \in \Omega.$ Consider $\Phi(w) = \varphi_w \in Lip_0(\mathbb R).$  By definition of the Lipschitz constant, there are two different real numbers $r_1$ and $r_2$ such that   
	$$
	\frac{|\varphi_w(r_1)-\varphi_w(r_2)|}{|r_1-r_2|} > Lip(\varphi_w) - \varepsilon.
	$$
	Consider  the measurable  functions $\hat f_1=r_1 \chi_{\Omega} $ and $\hat f_2=r_2 \chi_{\Omega}.$  Then
	$$
	| (\Phi \circ \hat f_1)(w)- (\Phi \circ \hat f_2)(w) | = 
	| \varphi_w( r_1)- \varphi_w(r_2) |
	>  (Lip(\varphi_w) - \varepsilon) \, {|r_1-r_2|} .
	$$
Thus, since this can be done for all $r_1,r_2 \in \mathbb R$  for each $\varepsilon >0$ and  $w$ there are functions $f_1, f_2 \in X(\mu)$ having as representatives the  defined above  functions $\hat f_1$ and $\hat f_2$ in such a way that 
	$$
	|T_\Phi(f_1)(w)-T_\Phi(f_2)(w)| > (Lip(\Phi(w)) - \varepsilon)  \, {|r_1-r_2|}= (Lip(\Phi(w)) - \varepsilon)  \, {|f_1-f_2|(w)}.
	$$
Since this holds for all $\varepsilon,$ this means that the contant $Lip(\Phi(w))$ cannot be improved for this fixed $w.$  

Suppose now that there is another function $h \le Lip(\Phi(\cdot))$  such that  there is a non-null set $A$ satisfying
$ h \, \chi_A < Lip(\Phi(\cdot)) \, \chi_A$
and for all $f_1,f_2 \in X(\mu),$
$$
	|\Phi(f_1)(w)-T_\Phi(f_2)(w)|  \le  h(w) \, {|f_1(w)-f_2(w)|}, \quad w \in A.
$$
By the claim, this means that for every $r_1,r_2 \in \mathbb R, $  
$$
|\varphi_w(r_1)-\varphi_w(r_2)| \le h(w) | r_1-r_2| < Lip(\Phi(w))|r_1-r_2|,
$$
 for all $w$ belonging to a non-null set, what contradicts the definition of $Lip(\Phi(w)).$

%Since $h(\cdot) < Lip(\phi(\cdot))$ in a non-null set $A$, this contradicts the definition of
% $$
%	|T_\Phi(f_1)(w)-T_\Phi(f_2)(w)| \chi_A > (Lip(\Phi(w)) - \varepsilon)  \, {|r_1-r_2|}
%$$
%$$
%= (Lip(\Phi(w)) - \varepsilon)  \, {|f_1-f_2|(w)}
%>  (h(w) - \varepsilon)  \, {|f_1-f_2|(w)} \ge |T_\Phi(f_1)(w)-T_\Phi(f_2)(w)| \chi_A - \varepsilon  \, {|f_1-f_2|(w)}.
%$$
%for $\mu-$almost all $w.$

%Then there is a real number $k >0$ and a non-null subset $B \subseteq A$ such that $ k \, \chi_B < Lip(\Phi(\cdot)) \, \chi_B.$ INTENTO DE MEJORAR LA PRUEBA

%	Since $\varepsilon$ is arbitrary, we get that indeed $Lip(\Phi(w))$ is the best bound function. 

	Finally, suppose that the measurable functions  $\Phi$ and $\Psi$ on $Lip(\mathbb R)$ are equal $\mu-$a.e. and let $f \in X(\mu).$ Clearly, for two representatives $\hat f_1$ and $\hat f_2$ of $f$ we get $\Phi \circ \hat f_1 =  \Psi \circ \hat f_2$ $\mu-$a.e. and so we clearly have that $T_\Phi$ and $T_\Psi$ are equal as operators from 
	$X(\mu)$ to $Y(\mu).$ The associated bound functions are equal $\mu-$a.e. as a consequence of the definition of the K\"othe-Bochner spaces: the measurable functions  $w \mapsto \| \Phi(w)\|_{Lip_0(\mathbb R) } =Lip(\Phi(w))$ and
	$w \mapsto \| \Psi(w)\|_{Lip_0(\mathbb R)} =Lip(\Psi(w))$ are equal $\mu-$a.e.
\end{proof}

Observe that, in contrast to this previous case, a general lattice Lipschitz operator can be non-Lipschitz.
For example, let $([0,1], \Sigma, \mu)$ be the Legesgue measure space and consider $X(\mu) = Y(\mu) = L^1(\mu)$, so $\mathcal M(X(\mu), Y(\mu)) = L^\infty(\mu)$.
Define $T: L^1(\mu) \to L^1(\mu)$ as $T(f) = \inf \{ f^2, 1/\sqrt{\cdot} \}$.
Then, $| T(f)(w) - T(g)(w) | \leq 2 / \sqrt{w} \cdot | f(w) - g(w) |$, but considering $f_n = n \chi_{[0, 1/n^4]}$ and the zero function, it can be shown that $T$ is not Lipschitz continous.
	
Moreover, if $T: X(\mu) \to Y(\mu)$ is a lattice Lipschitz operator such that $T(f)(w) = \Phi(w) \big( f(w) \big)$ for a strongly measurable function $\Phi : \Omega \to Lip_0(\mathbb R)$, there exists a sequence of simple vector-valued functions $\Phi_n : \Omega \to Lip_0(\mathbb R)$ such that $Lip(\Phi(w) - \Phi_n(w)) \to 0$ for almost any $w \in \Omega$.
But that convergence has no relation with the norm in $Y(\mu)$, so it should not be possible to conclude strong claims about the continuity of $T$ without more hypothesis, as in the previous proposition.
In fact, in more ``pathological" cases as the ones provided by $L^\infty(\mu)$, easy examples of lattice Lispchitz non continuous operators can be found.
For example, $T: L^1(\mu) \to L^\infty(\mu)$ defined as $T(f) = \sup \big\{ \inf \{ f, 1 \}, -1 \big\}$.
There, $\Phi$ is a constant function, so it is strongly measurable.
This example does not contradict Theorem 1 of \cite{appel}, since that result requires, in our context, $Lip \ \Phi(w)$ to belong to $\mathcal M(X(\mu), Y(\mu))$ which is only composed of the null function in this example.

The result above motivates the following definition, that will provide some representation results for the class of lattice Lipschitz operators with representing functions $\Phi$ satisfying certain properties.

\begin{definition}   \label{defSSL}
	We say that a map $T:X(\mu) \to Y(\mu)$ is  a strongly lattice Lipschitz operator if it can be written  as a pointwise composition $T(f)= \Phi \circ f$  for all $f \in X(\mu)$ with a strongly measurable function $\Phi: \Omega \to Lip_0(\mathbb R)$ belonging to the K\"othe-Bochner space $\mathcal M(X,Y)(\mu,Lip_0(\mathbb R)).$  We will write $SLL(X(\mu),Y(\mu))$ for the space of all such operators endowed with the norm 
	$$
	\| T_\Phi\|_{SLL(X(\mu),Y(\mu))} := \big\| Lip( \Phi(\cdot) )  \big\|_{\mathcal M(X,Y)}.
	$$
	Note that, in particular, $T(0)=0.$
\end{definition}

For example, given any Banach function space $X(\mu)$ since  $\mathcal M(X(\mu),X(\mu)) = L^\infty(\mu)$ (see \cite[Th.1]{mali})  we have that  $SLL(X(\mu), X(\mu))$ is the set of $T_\Phi : X(\mu) \to X(\mu)$ for which $\Phi : \Omega \to Lip_0(\mathbb R)$ is a strongly measurable function such that $\text{ess} \sup Lip(\Phi(\cdot))$ is finite.

Next result gives a characterization of strongly lattice Lipschitz operators.
%It shows how the formulas of Remark \ref{sim} can be specialized for the case of  these operators between Banach function spaces. 
We will need the requirement that $X(\mu) \subseteq Y(\mu);$ 
it is satisfied each time $\mu$ a finite measure and $\chi_\Omega \in \mathcal M(X,Y)$.
For example, let $\mu$ be the Lebesgue measure in $[0,1]$, $X(\mu) = L^p[0,1]$ and $Y(\mu) = L^q[0,1]$ for $1 \le p, q \le \infty.$
If $p < q$, as $L^p[0,1] \supset L^q[0,1]$, we have that $SLL(L^p[0,1], L^q[0,1]) = \{ 0 \}$ 
(see for example \cite[Ex.2.1]{caldel}).
% $\mathcal M(L^p, L^q) = \{ 0 \}$}.
But if $p > q$, $L^p[0,1] \subset L^q[0,1]$, and is it well known that $\mathcal M(L^p[0,1], L^q[0,1]) = L^r[0,1],$ where $1/p + 1/r = 1/q,$ so $SLL(L^p[0,1], L^q[0,1]) = \{ T_\Phi: \Phi \in L^r(\mu, Lip_0(\mathbb R)) \}$.
Thus, for the $L^p(\mu)$ spaces of finite measures, the condition $X(\mu) \subseteq Y(\mu)$ is always satisfied when there are more strong lattice Lispchitz operators than the null one.

%{\color{red} QUIZA ESTO HABRÍA QUE PONERLO EN LA INTRODUCCION.}
%  MEJOR LO QUITAMOS
%Let us explain the approximation argument we are going to use for the case of simple functions. This clarifies the method to be used in the following proof, but it is not enough for the characterization of strongly lattice Lipschitz maps we are looking for; the reason is that in general simple functions are not norm dense in some of the K\"othe-Bochner spaces we are dealing with, as $L_\infty(\mu,Lip_0(\mathbb R))$ (Recall that $L_\infty(\mu)$ is a typical space of multiplication operators, for example, it can be identified with the space $\mathcal M(L_1(\mu),L_1(\mu))$.)  

\begin{remark}
%We can use Remark \ref{sim}.
Since $\Phi$ is strongly measurable we can approximate it pointwise with a sequence of simple functions
$\Phi_n(w)= \sum_{i=1}^{k_n} \varphi^n_i \chi_{A^n_i}(w) $ but in a $\mu-$null set $N.$
Fix a function $f \in X(\mu)$ and recall that we have assumed that $X(\mu) \subseteq Y(\mu).$
Then for every $w \in \Omega,$
\begin{equation*}
	|\Phi_n(w)(f(w))|= \left| \sum_{i=1}^{k_n} \varphi^n_i( f(w)) \chi_{A^n_i}(w) \right| \le \sum_{i=1}^{k_n} Lip(\varphi^n_i ) | f(w)| \chi_{A^n_i}(w) \in X(\mu),
\end{equation*}
and so $ \Phi_n(w)( f(w)) $ belongs to $Y(\mu).$ Note also that 
\begin{equation*}
	\sum_{i=1}^{k_n} Lip(\varphi^n_i ) | f(w)| \chi_{A^n_i}(w) = Lip( \Phi_n(w)) \, | f(w)|	
\end{equation*}
for every $w.$
Consider now the (non-negative) functions
\begin{equation*}
	h_n(\cdot) := \inf \big\{ Lip(\Phi_k(\cdot)) \, | f(\cdot)|: k \ge n \big\}.
\end{equation*} 
Since every Banach function space is order complete, the  functions $h_n$ belong to $Y(\mu).$ On the other hand,
$Lip(\Phi(w)- \Phi_n(w)) \to_n 0$  for  $w \in \Omega \setminus N.$ This gives
for  every  $w \notin N,$
\begin{equation*}
	\Big| Lip(\Phi(w)) \cdot f(w) - Lip(\Phi_n(w)) \cdot f(w) \Big| \le Lip(\Phi(w)- \Phi_n(w)) \cdot \big| f(w) \big| \to_n 0,
\end{equation*}
what implies that $h_n(\cdot) \to_n Lip(\Phi(\cdot)) \, | f(\cdot)|$ $\mu-$a.e.  The problem is that, in general, a sequence of measurable functions converging pointwise to a function in a Banach function space does not necessarily converges in norm to this function. 
%la sucesion de las funciones con soporte en
%[n,n+1] y valor n en el sopoerte, converge a cero y sus normas no están acotadas en l infinito
So we need further requirements, both in the space and in the properties of the functions involved.
\end{remark}

\vspace{2pt}

\begin{theorem} \label{repFat}
Let $X(\mu)$ and $Y(\mu)$ be  % $\sigma-$order continuous 
Banach function spaces such that  $X(\mu) \subseteq Y(\mu)$ and  $Y(\mu)$ has the Fatou property. The following facts are equivalent for  an operator $T:X(\mu) \to Y(\mu).$
\begin{itemize}

\item[(i)] $T$ is  strongly lattice Lipschitz.

\item[(ii)] There exist a strongly measurable function $\Phi:\Omega \to Lip_0(\mathbb R)$ such that $T=T_\Phi$ and 
  a sequence of essentially bounded  functions $(\Phi_n)_n \subset L_\infty(\mu, Lip_0(\mathbb R)),$ $\Phi_n: \Omega \to   Lip_0(\mathbb R)$ 
such that
$$
\sup_n \big\| Lip(\Phi_n(\cdot)) \big\|_{ \mathcal M ( X,Y)} < \infty
$$
and converges $\mu-$a.e. to $\Phi.$

\end{itemize}

Moreover, the sequence $(\Phi_n)_n$ can be chosen in such a way  that 
$$
\sup_n \big\| \Phi_n(\cdot) \big\|_{ \mathcal M ( X,Y)(\mu,Lip_0(\mathbb R) )} = \|\Phi\|_{ \mathcal M ( X,Y) (\mu,Lip_0(\mathbb R) )}.
$$

%$$
%T(f)= \Phi(w) \circ f(w) \quad \text{$\mu-$a.e} \quad  \text{for all } \,\,\,\, f \in X(\mu).
%$$
%La implicacion inversa en esta version de las simples no puede ser verdad: la sucesion de las funciones con soporte en
%[n,n+1] y valor n en el sopoerte, converge a cero y sus normas no están acotadas en l infinito

%In particular, $T$ is a Lipschitz operator with lattice Lipschitz bound function defined by  $\|Lip(\Phi(\cdot)) \|_{\mathcal M(X,Y)} .$
\end{theorem}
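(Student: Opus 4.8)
My plan is to route both implications through a single Fatou-type lower semicontinuity property of the multiplication norm, which I would isolate first as a preliminary observation. \emph{Claim}: if $Y(\mu)$ has the Fatou property and $(h_n)_n \subset \mathcal M(X,Y)$ satisfies $h_n \to h$ $\mu$-a.e.\ with $C := \sup_n \|h_n\|_{\mathcal M(X,Y)} < \infty$, then $h \in \mathcal M(X,Y)$ and $\|h\|_{\mathcal M(X,Y)} \le C$. To prove it, fix $f \in X(\mu)$ and set
$$
g_k := |f| \cdot \inf_{n \ge k} |h_n|.
$$
Since $|f|\,|h_k| = |h_k f| \in Y(\mu)$ and $0 \le g_k \le |h_k f|$, the ideal property places $g_k \in Y(\mu)$ with $\|g_k\|_{Y} \le \|h_k f\|_{Y} \le C \|f\|_{X}$. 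The sequence $(g_k)_k$ is increasing and, because $h_n \to h$ a.e.\ forces $\inf_{n \ge k} |h_n| \uparrow \liminf_n |h_n| = |h|$, it converges $\mu$-a.e.\ to $|hf|$. The Fatou property then yields $hf \in Y(\mu)$ with $\|hf\|_Y \le \sup_k \|g_k\|_Y \le C \|f\|_{X}$; as $f$ was arbitrary, $\|h\|_{\mathcal M(X,Y)} \le C$. This is precisely the device that converts the merely pointwise convergence highlighted in the Remark preceding the statement into a genuine norm bound.

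With this in hand, the implication (ii)$\Rightarrow$(i) is immediate: applying the claim to $h_n := Lip(\Phi_n(\cdot))$, whose a.e.\ limit is $Lip(\Phi(\cdot))$ by continuity of the norm on $Lip_0(\mathbb R)$, gives $Lip(\Phi(\cdot)) \in \mathcal M(X,Y)$. Since $\Phi$ is strongly measurable, this says exactly that $\Phi \in \mathcal M(X,Y)(\mu, Lip_0(\mathbb R))$, so $T = T_\Phi$ is strongly lattice Lipschitz by Definition \ref{defSSL} (and well defined into $Y(\mu)$ by Proposition \ref{th1strong}).

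For (i)$\Rightarrow$(ii) the task is to manufacture the approximating sequence. Starting from simple functions $s_n \to \Phi$ $\mu$-a.e.\ (available by strong measurability, see \cite{DiUhl}), I would compose with the radial retraction $\rho_r$ of $Lip_0(\mathbb R)$ onto the ball of radius $r$ and set
$$
\Phi_n(w) := \rho_{\min\{n,\,Lip(\Phi(w))\}}\big( s_n(w) \big).
$$
Joint continuity of $(r,x) \mapsto \rho_r(x)$ keeps each $\Phi_n$ strongly measurable, the truncation at level $n$ makes $\|\Phi_n(w)\|_{Lip_0(\mathbb R)} \le n$ so that $\Phi_n \in L_\infty(\mu, Lip_0(\mathbb R))$, and for a.e.\ $w$ one has $\min\{n, Lip(\Phi(w))\} = Lip(\Phi(w))$ for $n$ large, whence $\Phi_n(w) \to \rho_{Lip(\Phi(w))}(\Phi(w)) = \Phi(w)$. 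Crucially $\|\Phi_n(w)\|_{Lip_0(\mathbb R)} \le Lip(\Phi(w))$ pointwise, so $Lip(\Phi_n(\cdot)) \le Lip(\Phi(\cdot))$ and monotonicity of the multiplication norm gives $\sup_n \|Lip(\Phi_n(\cdot))\|_{\mathcal M(X,Y)} \le \|Lip(\Phi(\cdot))\|_{\mathcal M(X,Y)} < \infty$, which is the finiteness required in (ii). The \emph{moreover} clause then follows from the same sequence: the construction already gives $\sup_n \|\Phi_n\|_{\mathcal M(X,Y)(\mu, Lip_0(\mathbb R))} \le \|\Phi\|_{\mathcal M(X,Y)(\mu, Lip_0(\mathbb R))}$, while the claim applied to $h_n = Lip(\Phi_n(\cdot))$ yields the reverse inequality, forcing equality.

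The main obstacle I anticipate is twofold and concentrated in these two constructions. First, the approximating functions must simultaneously be essentially bounded (to lie in $L_\infty(\mu, Lip_0(\mathbb R))$) and be dominated \emph{pointwise} by $Lip(\Phi(\cdot))$ in $Lip_0(\mathbb R)$-norm (to control the multiplication norm and to obtain the sharp constant in the \emph{moreover} part); reconciling these with a.e.\ convergence and with strong measurability is exactly what the combined truncation--retraction $\rho_{\min\{n, Lip(\Phi(w))\}}$ is designed to do, and verifying its strong measurability is the delicate point. Second, since a.e.\ convergence does not transfer to the norm, the passage from uniform bounds on $Lip(\Phi_n)$ to membership and norm control of $Lip(\Phi)$ cannot be carried out directly; the $\inf_{n \ge k}$-monotonization that makes the Fatou property applicable is the technical heart of the argument, and it is the place where the hypothesis that $Y(\mu)$ has the Fatou property is genuinely used.
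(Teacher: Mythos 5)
Your proposal is correct, and it reaches the theorem by a partly different route, so let me compare the two. The analytic core is shared: your preliminary Claim is essentially the paper's inline argument for (ii)$\Rightarrow$(i) --- fix $f \in X(\mu)$, monotonize via $g_k = |f|\cdot\inf_{n \ge k}|h_n|$, and apply the Fatou property of $Y(\mu)$; the only cosmetic difference is that the paper deduces $Lip(\Phi(\cdot)) \in \mathcal M(X,Y)$ from the automatic continuity of positive multiplication operators, while you extract the norm bound from Fatou directly. What your packaging buys is the \emph{moreover} clause almost for free: the paper must monotonize a second time (setting $\tau_n = \inf_{k\ge n} Lip(\psi_k(\cdot))$) and invoke the Fatou property of $\mathcal M(X,Y)$ from \cite[Prop. 3.3]{caldel}, whereas your Claim applied to $h_n = Lip(\Phi_n(\cdot))$ yields the reverse inequality at once. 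The genuine divergence is in (i)$\Rightarrow$(ii): the paper uses no retraction, but instead patches with characteristic functions, taking simple $\Phi_n \to \Phi$ $\mu$-a.e., $B_n = \{w : Lip(\Phi_n(w)) < Lip(\Phi(w))\}$, and $\psi_n = \Phi_n\chi_{B_n} + \Phi\chi_{B_n^c}$, so that $Lip(\psi_n(\cdot)) = \min\{Lip(\Phi_n(\cdot)), Lip(\Phi(\cdot))\}$ pointwise; this secures essential boundedness (from the simple functions), domination by $Lip(\Phi(\cdot))$, and a.e.\ convergence simultaneously, with no measurability left to verify. Your radial-retraction truncation $\rho_{\min\{n,\, Lip(\Phi(w))\}}(s_n(w))$ achieves the same three properties at the cost of the strong-measurability check you flag as delicate; it does go through (on each level set of the simple function $s_n$ your map is $w \mapsto \rho_{m_n(w)}(\varphi_i)$, a continuous function of the measurable scalar $m_n(w)$, hence Borel measurable with separable range and strongly measurable by Pettis), but the paper's patching is the more economical device and would sit equally well under your lemma. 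One shared caveat, not a gap: both the norm inequality in your Claim and the paper's own moreover argument use the Fatou property in its standard form, including convergence of the norms, which is formally stronger than the membership-only statement given in the paper's preliminaries.
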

%\textcolor{red}{Does $(ii)$ implies $\Phi \in M(X,Y)(\mu, Lip_0(\mathbb R))$? Basically we are saying that $\Phi \in \mathcal M(X,Y)$ iff is the limit of $\Phi_n \in \mathcal M(X, Y)$ such that $\sup_n \| \Phi_n \|_{\mathcal M(X,Y)} < \infty$?}

\vspace{0.3cm}
\begin{proof} (i) $\Rightarrow$ (ii)
%The direct implication is obvious.
If $T$ is strongly lattice Lipschitz it can be written as $T_\Phi$ for a certain strongly measurable function belonging to $\mathcal M(X,Y)(\mu, Lip_0(\mathbb R)).$ Also, there is a sequence of simple functions as $\Phi_n(w)= \sum_{i=1}^{k_n} \varphi^n_i \chi_{A^n_i}(w) $  that converges $\mu-$a.e. to $\Phi.$ Then for every $n \in \mathbb N$ consider the measurable set
\begin{equation*}
	B_n= \{ w :  Lip(\Phi_n(w)) < Lip(\Phi(w)) \} 
\end{equation*}
and the measurable function 
\begin{equation*}
	\psi_n(w):= \Phi_n(w)\chi_{B_n} + \Phi(w) \chi_{B_n^c}.
\end{equation*}
%{\color{blue} Or say $\psi_n = \inf\{ \Phi_n, \Phi \}$.}

Clearly, 
\begin{equation*}
	Lip(\psi_n(\cdot)) = Lip(\Phi_n(\cdot)) \chi_{B_n} + Lip( \Phi(\cdot)) \chi_{B_n^c }\le Lip(\Phi_n(\cdot ))
\end{equation*}
$\mu-$a.e. and so it is  bounded $\mu-$a.e.
%{\color{red} Why? Could be $\Phi_n$ and $\Phi$ not essentially bounded?}
Also, by the ideal property of the norm of the Banach function space $\mathcal M(X,Y),$ we obtain the uniform bound for the norms $\| \psi_n \|_{\mathcal M(X,Y)(\mu, Lip_0(\mathbb R))} \le \| \Phi \|_{\mathcal M(X,Y)(\mu, Lip_0(\mathbb R))} .$ Finally, it is also clear that the sequence $(\psi_n)_n$ converges $\mu-$a.e. to $\Phi,$ what gives the result.

\vspace{4mm}

(ii) $\Rightarrow$ (i) % Let us prove now  the converse implication. 
By hypothesis, there is a strongly measurable  function $\Phi$ representing $T$ and we have to prove that 
$\Phi \in \mathcal M ( X,Y ) \big(\mu, Lip_0(\mathbb R) \big),$ that is   $\|\Phi(\cdot)\|_{Lip_0(\mathbb R)} = Lip(\Phi(\cdot)) \in \mathcal M(X,Y).$ 

\vspace{4mm}

By hypothesis, we can approximate $\Phi$ $\mu-$a.e.  with a sequence of simple functions
$\Phi_n$ that are  bounded but in a $\mu-$null set $N.$
Take a function $f \in X(\mu)$.
Since $X(\mu) \subseteq Y(\mu),$  we have for $\mu-$almost all $w$
%Now let us see that the action  of any simple function with an element of $X(\mu)$ belongs to $Y(\mu).$
%To see this note that
$$
|\Phi_n(w)(f(w))| \le Lip( \Phi_n(w)) | f(w)| \le \| Lip( \Phi_n(w)) \|_{L^\infty(\mu)}  \,  | f(w)|  \in X(\mu) \subset Y(\mu),
$$
and so $ \Phi_n(w)( f(w)) $ belongs to $Y(\mu).$ 
Take the (non-negative classes of) functions 
$$
h_n(\cdot) := \inf \big\{ Lip(\Phi_k(\cdot)) \, | f(\cdot)|: k \ge n \big\}.
$$
Note that the infimum of every sequence of measurable functions that is bounded by below is again measurable, and every Banach function space is order complete, so for every $n \in \mathbb N,$ the (equivalence class of the) function $h_n$ belongs to $Y(\mu).$
Since we have  $Lip(\Phi(w)- \Phi_n(w)) \to_n 0$ $\mu-$a.e. we get
$$
\Big| Lip(\Phi(w)) \cdot f(w) - Lip(\Phi_n(w)) \cdot f(w) \Big| \le   Lip(\Phi(w)- \Phi_n(w)) \cdot \big| f(w) \big| \to_n 0,
$$
%{\color{red} $Lip(\Phi_n(w))|f(w)| \in \mathcal M(X, Y) \cdot X(\mu) \subset Y(\mu)$? So we do not need the condition $\Phi_n \in L^\infty(\mu)$?}
and so $h_n(\cdot) \to_n Lip(\Phi(\cdot)) \, | f(\cdot)|$ $\mu-$a.e. In addition,
the sequence $(h_n)_n$ is increasing and positive. Recall that  by hypothesis $\sup_n \|Lip(\Phi_n)(\cdot))\|_{M(X,Y)} < \infty,$ and so
\begin{align*}
	\sup_n \|h_n(\cdot) \|_{Y(\mu) }
	& \le \sup_n \|Lip(\Phi_n)(\cdot) \cdot | f(\cdot)|\|_{Y(\mu)} \\
	& \le  \sup_n \|Lip(\Phi_n)(\cdot)\|_{\mathcal M(X,Y)} \cdot  \|f\|_{X(\mu)} < \infty.
\end{align*}
Since $Y(\mu)$ has the Fatou property, we conclude  that the $\mu-$a.e.  limit of  the sequence defined by the functions $h_n$ (that is,  $Lip(\Phi(\cdot)) \, | f(\cdot)|$), belongs to $Y(\mu)$ too. Since $\Phi$ is strongly measurable, we know that $T_\Phi(f)$ that is given by (the equivalence class of) $\Phi(\cdot) ( f(\cdot))$ is a measurable function belonging to $L_0(\mu),$ and 
$$
 | \Phi(\cdot) (f(\cdot)) | \le Lip(\Phi(\cdot)) \, | f(\cdot)|.
 $$
Since we have shown that $Lip(\Phi(\cdot)) \, | f(\cdot)| \in Y(\mu),$ we obtain by the ideal property of $Y(\mu)$ that  $\Phi(\cdot) (f(\cdot)) \in Y(\mu).$ Note that all the arguments above are independent of the representative.
 This shows that the operator $T_\Phi$ is well-defined from $X(\mu)$ to $Y(\mu).$

Note that in fact we have shown more. We have proved that for every function $f\in X(\mu),$  the multiplication 
$Lip(\Phi(\cdot)) \,|f(\cdot)|$ belongs to $Y(\mu).$ Since a well-defined operator defined by the product with a  (positive) function is always continuous, we have that the multiplication 
operator $M_{Lip(\Phi)}: X(\mu) \to Y(\mu)$ given by the function $Lip(\Phi)(\cdot)$ is well-defined and, since it is positive, it is continuous (see \cite[Th.4.3]{ali}). This means that $Lip(\Phi(\cdot)) \in \mathcal M(X,Y).$ Since $\Phi$ is strongly measurable, we get that $\Phi \in \mathcal M(X,Y)( \mu, Lip_0(\mathbb R)),$ as desired.

\vspace{4mm}

Finally,   in order to prove the ``moreover part" of the result, consider the sequence  $(\psi_n)_n$ of vector valued functions and the associate sequence  of real valued measurable 
functions $(Lip( \psi_n(\cdot)))_n$ that has been constructed in the proof of (i) $\Rightarrow$(ii). 
The functions $\psi_n$ will play the role of the functions $\Phi_n$ appearing in the last statement of the result.
Since
 $(\psi_n)_n$ converges $\mu-$a.e. to $\Phi,$ we get that $(Lip( \psi_n(\cdot)))_n$ also converges $\mu-$a.e. to 
$Lip(\Phi(\cdot)).$ Consider now the sequence of real valued functions $(\tau_n)_n,$ where for each $n$
$$
\tau_n(\cdot)= \inf \big\{ Lip(\psi_k)(\cdot): k \ge n \big\}.
$$
Clearly, $\tau_n(\cdot)  \uparrow Lip(\Phi(\cdot)) $ $\mu-$a.e.  and, due to the ideal property of the space we also have
$\tau_n(\cdot) \in \mathcal M(X,Y)(\mu, Lip_0(\mathbb R)).$ Thus, we have that
$$
\big\| \tau_n(\cdot) \big\|_{\mathcal M(X,Y)}
\le 
\big\| \psi_n(\cdot) \big\|_{\mathcal M(X,Y)(\mu, Lip_0(\mathbb R))}
\le
\big\| \Phi(\cdot) \big\|_{\mathcal M(X,Y)(\mu, Lip_0(\mathbb R))}
$$
Since $Y(\mu)$ has the Fatou property we get that $\mathcal M(X,Y)$ has the Fatou property too (\cite[Prop. 3.3]{caldel}),
what gives that
$$
\sup_n \big\| \tau_n \big\|_{\mathcal M(X,Y)} =
\sup_n \big\| \psi_n \big\|_{\mathcal M(X,Y)(\mu, Lip_0(\mathbb R))}
=
\big\| \Phi \big\|_{\mathcal M(X,Y)(\mu, Lip_0(\mathbb R))}.
$$

%Note also that for two elements $f,g \in X(\mu),$
%$$
%\big| \Phi(\cdot) (f(\cdot)) -  \Phi(\cdot) (g(\cdot)) \big| 
%\le Lip(\Phi(\cdot)) \, | f(\cdot) - g(\cdot)|
%$$
%and then, using the norm inequalities associated to the space of multiplication operators, we get
%\begin{align*}
%	\| T_\Phi(f)- T_\Phi(g)\|_{Y(\mu)}
%	&= \big\| \Phi(\cdot) (f(\cdot)) -  \Phi(\cdot) (g(\cdot)) \big\|_{Y(\mu)} \\
%	& \le \|Lip(\Phi(\cdot))\|_{\mathcal M(X,Y)} \cdot  \|  f -  g \|_{X(\mu)},
%\end{align*}
%and so $T_{\Phi}$ is Lipschitz with Lipschitz norm $\le  \|Lip(\Phi(\cdot))\|_{\mathcal M(X,Y)}.$
%{\color{red} The Lispchit condition is said in the Proposition before.}
%

\end{proof}

\vspace{0.6cm}

\section{Applications: approximation formulas for strongly lattice Lipschitz operators}

In the case of strongly lattice Lipschitz operators we can obtain a good representation in terms of tensor products (and consequently good approximation formulas) under some mild requirements. In this section we will show how to do so; we will also write down results for the case of $L^p-$spaces of finite measure to illustrate the procedure. These results will complement the analysis of lattice Lipschitz operators for the case of infinite measure spaces, which are known to be defined on Euclidean spaces and on $C(K)-$-spaces as a particular subclass of superposition operators. As we explained in the Introduction, the good representation of these operators playing the role of multiplication operators in the linear context is the key to use them as useful tools in analysis and to find good applications in other branches of applied mathematics such as Machine Learning.

Since we are working in the context of Köthe-Bochner spaces and operators between Banach function spaces, it is natural to consider tensor product representations for the space of strongly lattice Lipschitz operators.  Each simple tensor $t=\sum_{i=1}^n \varphi_i \otimes h_i \in Lip_0(\mathbb R) \otimes \mathcal M(X,Y)$ could be seen as an operator, in this case by pointwise composition, 
$$
T_t (f)(w) = \Big( \sum_{i=1}^n \varphi_i \otimes h_i  \Big)(f)(w) =  \sum_{i=1}^n \varphi_i \big(  f(w)  \big) \cdot h_i (w), \quad w \in \Omega.
$$
%However, it is easy to see that  CON ESTA DEFINICION QUE ESTA AQUI, SI QUE ESTÁ BIEN DEFINIDO, NO HAY PROBLEMA
%the definition of the action of a tensor on a function to become an operator is not free of the representation of the tensor. For example, if $\varphi \in Lip_0(\mathbb R),$ $h \in \mathcal M(X,Y)$ and $f \in X(\mu),$
%$$
%\varphi \otimes  \lambda h (w) \big( f(w) \big) = \varphi \big( \lambda f(w)  \big) \cdot h(w) \ne \lambda \varphi \big( f(w)\big) \cdot h(w)
%= \lambda \varphi \otimes  h (w) \big( f(w) \big) 
%$$
%in general. This leads to the need of defining

We can define  a sort of canonical representation of the tensor, based on the existence of standard representation of simple functions given by disjoint sums of single tensors as $t=\sum_{i=1}^n \varphi_i \otimes \chi_{A_i},$ what gives when acts on a function $f \in X(\mu)$
$$
T_t(f)(w)=  \sum_{i=1}^n \varphi_i \otimes \chi_{A_i}(w) (f(w))=  \sum_{i=1}^n \varphi_i \big( f(w) \big) \, \chi_{A_i}(w), \quad f \in X(\mu), \,\,\, w \in \Omega,
$$
for disjoint measurable sets $A_1,...,A_n.$ Write $S_{X,Y} $ for the subspace of the simple functions in $\mathcal M(X,Y)$
 (which can always be represented as above), 
and endow the tensor product $Lip_0(\mathbb R) \widehat\otimes  S_{X,Y} $ with the natural norm $\Delta_{\mathcal M(X,Y)}$ provided by the identification
$Lip_0(\mathbb R) \widehat\otimes  S_{X,Y} \ni t \mapsto T_t \in  SLL(X,Y).$ 
Assuming that simple functions are dense in $ \mathcal M(X,Y),$ we can obtain that the set of tensors as above is dense in
$$
Lip_0(\mathbb R) \widehat\otimes_{\Delta_{\mathcal M(X,Y)}}   \mathcal M(X,Y).
$$
This directly gives the following

\begin{corollary}
Let $X(\mu)$ and $Y(\mu)$ be  Banach function spaces such that simple functions are dense in $\mathcal M(X,Y) \big(\mu, Lip_0(\mathbb R) \big).$ Then
$$
Lip_0(\mathbb R) \widehat\otimes_{\Delta_{\mathcal M(X,Y)}}  \mathcal M(X,Y) =\mathcal M(X,Y) \big(\mu, Lip_0(\mathbb R) \big) = SLL(X,Y).
$$
Therefore, for each strongly lattice Lipschitz operator $T$ there is a sequence of simple tensors $t_n=  \sum_{i=1}^{k_n} \varphi^{k_n} \otimes \chi_{A_i^n} $ such that
$$
\lim_n \| T- \sum_{i=1}^{k_n} \varphi^{k_n} \otimes \chi_{A_i^n} \|_{\mathcal M(X,Y) (\mu, Lip_0(\mathbb R))} =0.
$$
\end{corollary}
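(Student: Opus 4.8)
The plan is to prove the two displayed equalities by chaining three isometric identifications and then to read off the approximation statement from density of the canonical simple tensors. The corollary is presented as a direct consequence of the preceding discussion, so the work is to assemble the pieces carefully rather than to produce a new estimate.

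First I would record the identification $SLL(X,Y) = \mathcal M(X,Y)(\mu,Lip_0(\mathbb R))$, which is essentially built into Definition \ref{defSSL}. The assignment $\Phi \mapsto T_\Phi$ is surjective onto $SLL(X,Y)$ by definition, and it is injective on $\mu$-classes: by Proposition \ref{th1strong} two $\mu$-a.e.\ equal representing functions give the same operator, while the converse follows by evaluating $T_\Phi(\lambda \chi_\Omega)(w) = \Phi(w)(\lambda)$ on a countable dense set of $\lambda$ (note $\chi_\Omega \in X(\mu)$ is an order unit) and using the continuity of each $\Phi(w)$, as in Lemma \ref{lem:def_Phi}. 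It is isometric because, by Definition \ref{defSSL}, $\|T_\Phi\|_{SLL} = \|Lip(\Phi(\cdot))\|_{\mathcal M(X,Y)}$, which is exactly the K\"othe-Bochner norm $\|\Phi\|_{\mathcal M(X,Y)(\mu,Lip_0(\mathbb R))}$.

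Next I would treat the tensor factor. To a canonical simple tensor $t = \sum_{i=1}^n \varphi_i \otimes \chi_{A_i}$ with the $A_i$ disjoint of positive measure I associate the simple K\"othe-Bochner function $\Phi_t = \sum_{i=1}^n \varphi_i \chi_{A_i}$, so that $T_t = T_{\Phi_t}$. One checks this assignment is well-defined on the algebraic tensor product (independent of the chosen representation, after refining to a common disjoint partition via bilinearity) and that its image is precisely the set of simple functions in $\mathcal M(X,Y)(\mu,Lip_0(\mathbb R))$. Since $\Delta_{\mathcal M(X,Y)}$ is defined by $\Delta_{\mathcal M(X,Y)}(t) = \|T_t\|_{SLL}$, the previous step gives $\Delta_{\mathcal M(X,Y)}(t) = \|Lip(\Phi_t(\cdot))\|_{\mathcal M(X,Y)} = \|\Phi_t\|_{\mathcal M(X,Y)(\mu,Lip_0(\mathbb R))}$, using $Lip(\Phi_t(w)) = \sum_i Lip(\varphi_i)\chi_{A_i}(w)$ as in Lemma \ref{simplef}. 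Hence $t \mapsto \Phi_t$ is an isometry onto the simple functions, and in particular $\Delta_{\mathcal M(X,Y)}$ is a genuine norm. Passing to completions, $\mathcal M(X,Y)(\mu,Lip_0(\mathbb R))$ is a Banach space whose simple functions are dense by hypothesis, so the completion $Lip_0(\mathbb R) \widehat\otimes_{\Delta_{\mathcal M(X,Y)}} \mathcal M(X,Y)$ is carried isometrically onto the closure of the simple functions, i.e.\ onto all of $\mathcal M(X,Y)(\mu,Lip_0(\mathbb R))$. Chaining this with the identification of the first step yields both displayed equalities, and since the canonical simple tensors are by construction dense in the completion, every $T \in SLL(X,Y)$ is an $\mathcal M(X,Y)(\mu,Lip_0(\mathbb R))$-norm limit of operators $T_{t_n}$ with $t_n = \sum_i \varphi_i^{k_n} \otimes \chi_{A_i^n}$, which is the approximation claim.

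I expect the main obstacle to be bookkeeping rather than a single hard step: one must verify that $\Delta_{\mathcal M(X,Y)}$ is well-defined on the algebraic tensor product and separates points, and that the density hypothesis is invoked in exactly the right place, namely to guarantee that the completion of the simple tensors fills out the entire K\"othe-Bochner space instead of a proper closed subspace. The three identifications are individually routine, and the only genuine input is Proposition \ref{th1strong} (for the isometry and injectivity of $\Phi \mapsto T_\Phi$) together with completeness of the K\"othe-Bochner space.
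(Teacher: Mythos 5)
Your proposal is correct and follows essentially the same route as the paper: identify canonical simple tensors $\sum_i \varphi_i \otimes \chi_{A_i}$ isometrically with simple K\"othe--Bochner functions $\sum_i \varphi_i \chi_{A_i}$ (the norm coincidence coming from Lemma \ref{simplef}), invoke the density hypothesis so that their closure is all of $\mathcal M(X,Y)\big(\mu, Lip_0(\mathbb R)\big)$, and identify that space with $SLL(X,Y)$ via $\Phi \mapsto T_\Phi$ using Definition \ref{defSSL} and Proposition \ref{th1strong}. The only point where you go slightly beyond the paper is the injectivity of $\Phi \mapsto T_\Phi$: the paper cites the final statement of Proposition \ref{th1strong}, which strictly speaking gives well-definedness ($\Phi = \Psi$ $\mu$-a.e. implies $T_\Phi = T_\Psi$), whereas your evaluation of $T_\Phi(\lambda \chi_\Omega)(w) = \Phi(w)(\lambda)$ over a countable dense set of $\lambda$, in the spirit of Lemma \ref{lem:def_Phi}, supplies the genuine converse needed for the identification to be one-to-one.
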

\begin{proof}
The identification of  the closure of the tensor product with the space $\mathcal M(X,Y) \big(\mu, Lip_0(\mathbb R) \big)$ is immediate, due to the fact that the map $\sum_{i=1} \varphi_i \otimes \chi_{A_i}  \mapsto  \sum_{i=1} \varphi_i \chi_{A_i}$ is injective,
where $\sum_{i=1} \varphi_i \otimes \chi_{A_i}$ is any simple tensor in $Lip_0(\mathbb R) \otimes \mathcal M(X,Y)$ with $\{A_i\}_i$ disjoint measurable sets. Density of simple functions and coincidence of the norm for the set of simple functions in both spaces gives the equality. 

The identification of $ \mathcal M(X,Y) \big(\mu, Lip_0(\mathbb R) \big)$ with $ SLL(X,Y)$ is a consequence of requirements in Definition
\ref{defSSL} and  the injectivity of the map
that carries each function $\Phi \in \mathcal M(X,Y) \big(\mu, Lip_0(\mathbb R) \big)$ to the operator $T_\Phi$ that is defined in Proposition \ref{th1strong}. The injectivity is also given by the last statement in this proposition. This proves the result.

\end{proof}

It is well known that simple functions are dense in Bochner spaces $L^p(\mu,E)$ where $E$ is any Banach space, provided that $1 \le p < \infty$ (see for example \cite[S.7 .2]{defa}, which are particular cases of multiplication operator spaces such as those we are concerned with for $E=Lip_0(\mathbb R). $ However, if $E$ is an infinite dimensional Banach space, simple functions are not dense in $L^\infty(\mu,E);$ in fact, the tensor product $L^\infty(\mu) \otimes E$ is not dense in $L^\infty(\mu,E)$ (\cite[Ex.4 .9]{defa}). Let us write as usual $\Delta_p$ for the norm on the tensor product of $Lip_0(\mathbb R)$ and  $L^p(\mu)$ inherited from the Bochner space $L^p(\mu,Lip_0(\mathbb R).$ Recall that for $\mu$ finite, $1\le q \le p < \infty$ and $r$ such that $1/q=1/p+1/r,$ we have that  $L^p(\mu) \subseteq L^q(\mu)$ and $\mathcal M(L^p(\mu),L^q(\mu))= L_r(\mu).$

\begin{corollary}
Let $\mu$ be a finite measure, $1\le q \le p  < \infty$ and $1/q=1/p+1/r.$  Then an operator $T:L^p(\mu) \to L^q(\mu)$ is strongly lattice Lipschitz if and only if 
there is  a function $\Phi \in L^r \big(\mu, Lip_0(\mathbb R) \big)$ such that
$$
T(f)(w)= \Phi(w) \circ f(w) \quad \text{for} \,\, w \in \Omega \,\,\, \text{$\mu-$a.e} \,\,\text{ and for all } \,\,\,\, f \in X(\mu).
$$
In other words, we have the isometries
$$
 Lip_0(\mathbb R)  \widehat \otimes_{\Delta_r}    L^r(\mu) = L^r \big(\mu, Lip_0(\mathbb R) \big)= SLL(L^p,L^q).
$$
\end{corollary}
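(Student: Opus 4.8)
The plan is to specialize the previous Corollary to the spaces $X(\mu)=L^p(\mu)$ and $Y(\mu)=L^q(\mu)$, reading off everything from the description of the multiplier space. First I would record the known multiplier identity $\mathcal M(L^p(\mu),L^q(\mu))=L^r(\mu)$, valid because $\mu$ is finite, $q\le p$ and $1/q=1/p+1/r$, so that in particular $L^p(\mu)\subseteq L^q(\mu)$. The crucial observation is that the K\"othe--Bochner space built over this multiplier space is \emph{isometrically} the ordinary Bochner space: since the norm of $\Phi$ in $\mathcal M(X,Y)(\mu,Lip_0(\mathbb R))$ is $\big\|\,\|\Phi(\cdot)\|_{Lip_0(\mathbb R)}\,\big\|_{\mathcal M(X,Y)}=\big\|Lip(\Phi(\cdot))\big\|_{L^r(\mu)}$, it coincides with the Bochner norm, whence
\begin{equation*}
\mathcal M(X,Y)\big(\mu,Lip_0(\mathbb R)\big)=L^r\big(\mu,Lip_0(\mathbb R)\big)
\end{equation*}
isometrically.

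Next I would establish the stated equivalence by unwinding Definition \ref{defSSL}. By definition $T$ is strongly lattice Lipschitz exactly when $T=T_\Phi$ for some strongly measurable $\Phi:\Omega\to Lip_0(\mathbb R)$ with $Lip(\Phi(\cdot))\in\mathcal M(L^p(\mu),L^q(\mu))=L^r(\mu)$, that is, with $\Phi\in L^r(\mu,Lip_0(\mathbb R))$ by the previous identification. That such a $\Phi$ indeed produces a well-defined operator $T_\Phi:L^p(\mu)\to L^q(\mu)$ is guaranteed by Proposition \ref{th1strong}, which also shows that $\mu$-a.e. equal representing functions give the same operator and the same bound function. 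This yields the middle isometric identity $SLL(L^p,L^q)=L^r(\mu,Lip_0(\mathbb R))$ through the map $T_\Phi\mapsto\Phi$, whose isometric character is immediate from $\|T_\Phi\|_{SLL}=\|Lip(\Phi(\cdot))\|_{L^r(\mu)}=\|\Phi\|_{L^r(\mu,Lip_0(\mathbb R))}$.

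For the tensor-product equality I would invoke the previous Corollary. Its only hypothesis is that simple functions be dense in $\mathcal M(X,Y)(\mu,Lip_0(\mathbb R))=L^r(\mu,Lip_0(\mathbb R))$; when $q<p$ we have $r<\infty$, and simple functions are dense in every Bochner space $L^r(\mu,E)$ with $1\le r<\infty$, so the hypothesis holds. The Corollary then gives $Lip_0(\mathbb R)\widehat\otimes_{\Delta_{\mathcal M(X,Y)}}\mathcal M(X,Y)=\mathcal M(X,Y)(\mu,Lip_0(\mathbb R))$, and it only remains to identify the abstract cross norm $\Delta_{\mathcal M(X,Y)}$ with $\Delta_r$. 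Both are inherited from the \emph{same} identification of the tensor product with the Bochner space recalled in the Introduction, namely $L^r(\mu)\widehat\otimes_{\Delta_r}Lip_0(\mathbb R)=L^r(\mu,Lip_0(\mathbb R))$, so they coincide and the first equality follows.

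The only genuine subtlety, which I would flag explicitly, is the boundary case $q=p$: then $1/r=0$, so $r=\infty$ and $\mathcal M=L^\infty(\mu)$. The isometry $SLL(L^p,L^p)=L^\infty(\mu,Lip_0(\mathbb R))$ still holds by the argument above, but the tensor-product factor must be dropped, because $Lip_0(\mathbb R)$ is infinite-dimensional and hence $Lip_0(\mathbb R)\otimes L^\infty(\mu)$ is \emph{not} dense in $L^\infty(\mu,Lip_0(\mathbb R))$ (as recalled via \cite[Ex.4.9]{defa}). Thus the full chain of isometries should be read under $q<p$ (equivalently $r<\infty$); apart from isolating this case, the argument presents no real obstacle, the work being entirely in matching the multiplier identity, the K\"othe--Bochner/Bochner norms, and the density of simple functions.
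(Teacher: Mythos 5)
Your proposal is correct and follows exactly the route the paper intends: the paper states this corollary without a written proof, treating it as an immediate specialization of the preceding tensor-product corollary via the multiplier identity $\mathcal M(L^p(\mu),L^q(\mu))=L^r(\mu)$, the isometric identification $\mathcal M(X,Y)(\mu,Lip_0(\mathbb R))=L^r(\mu,Lip_0(\mathbb R))$, and the density of simple functions in $L^r(\mu,E)$ for $1\le r<\infty$ --- which is precisely your argument. Your flagging of the boundary case $q=p$ (so $r=\infty$, where $Lip_0(\mathbb R)\otimes L^\infty(\mu)$ is not dense in $L^\infty(\mu,Lip_0(\mathbb R))$) is a legitimate refinement: the paper records this non-density just before the corollary yet does not exclude $q=p$ from the hypotheses, so your observation that the full chain of isometries should be read under $q<p$ identifies a real, if minor, imprecision in the statement rather than a flaw in your proof.
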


Using Theorem \ref{repFat} we can easily get the next result on $L_p-$spaces over finite measures.  The spaces $L^p(\mu)$ for $1\le p < \infty$ have the Fatou property. In accordance with the initial purpose of the paper, next corollary shows the similarities with the description of multiplication operators between $L^p-$spaces.

\begin{corollary}
Let $\mu$ be a finite measure, $1\le q \le p  < \infty$ and $1/q=1/p+1/r.$  Then an operator $T:L^p(\mu) \to L^q(\mu)$ is strongly lattice Lipschitz if and only if there exists  a sequence of simple  functions $(\Phi_n)_n \subset L^r \big(\mu, Lip_0(\mathbb R) \big)$   that converges pointwise $\mu-$a.e. to the representing function $\Phi(w)(\lambda)= T(\lambda \chi_\Omega)(w)$ and such that
$$
\sup_n \big\| Lip(\Phi_n(\cdot)) \big\|_{ L^r(\mu)} < \infty.
$$
\end{corollary}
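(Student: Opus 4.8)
The plan is to read the statement as the specialization of Theorem \ref{repFat} to the pair $X(\mu)=L^p(\mu)$ and $Y(\mu)=L^q(\mu)$, so I would begin by checking that this pair satisfies the two structural hypotheses of that theorem. Since $\mu$ is finite and $p\ge q$, H\"older's (equivalently Jensen's) inequality gives the inclusion $L^p(\mu)\subseteq L^q(\mu)$, and $L^q(\mu)$ has the Fatou property for every $1\le q<\infty$, as recalled just before the statement. I would then record the identification of the relevant multiplication space: for a finite measure and $1/q=1/p+1/r$ one has $\mathcal M(L^p(\mu),L^q(\mu))=L^r(\mu)$, so that $\mathcal M(X,Y)(\mu,Lip_0(\mathbb R))=L^r(\mu,Lip_0(\mathbb R))$ and the quantity $\|Lip(\Phi_n(\cdot))\|_{\mathcal M(X,Y)}$ appearing in Theorem \ref{repFat} is exactly $\|Lip(\Phi_n(\cdot))\|_{L^r(\mu)}$. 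With these two remarks, the equivalence (i)$\Leftrightarrow$(ii) of Theorem \ref{repFat} already delivers an a.e.\ convergent approximating sequence with a uniform $L^r$-bound on the Lipschitz norms; the only discrepancy with the present statement is that Theorem \ref{repFat} furnishes merely essentially bounded functions $\Phi_n$, whereas here they must be simple.

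For the reverse implication this discrepancy is harmless: a simple function is essentially bounded, so a sequence of simple functions as in the statement is admissible in condition (ii) of Theorem \ref{repFat}. The representing function $\Phi(w)(\lambda)=T(\lambda\chi_\Omega)(w)$ is well defined precisely because the hypothesis that $T$ is (lattice) Lipschitz makes $\lambda\mapsto T(\lambda\chi_\Omega)(w)$ an element of $Lip_0(\mathbb R)$; Lemma \ref{lem:def_Phi} and Theorem \ref{theo:carac_Phi} then reconstruct $T$ from it, giving $T=T_\Phi$. Since $\Phi$ is the a.e.\ limit of the $\Phi_n$, all the requirements of condition (ii) hold, and hence $T$ is strongly lattice Lipschitz.

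For the forward implication I would not pass through the essentially bounded sequence of the theorem but approximate the representing function directly. If $T$ is strongly lattice Lipschitz then $T=T_\Phi$ with $\Phi$ strongly measurable and $Lip(\Phi(\cdot))\in L^r(\mu)$; in particular $\Phi$ is essentially separably valued. Fixing a countable dense subset of its range that contains the zero map of $Lip_0(\mathbb R)$ and letting $\Phi_n(w)$ be the nearest candidate among the first $n$ (ties broken by least index), one gets simple functions with $\Phi_n\to\Phi$ a.e.; and because $0$ is always an admissible candidate, $Lip(\Phi_n(w))=\|\Phi_n(w)\|_{Lip_0(\mathbb R)}\le 2\,\|\Phi(w)\|_{Lip_0(\mathbb R)}=2\,Lip(\Phi(w))$ for every $w$. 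Consequently $\sup_n\|Lip(\Phi_n(\cdot))\|_{L^r(\mu)}\le 2\,\|Lip(\Phi(\cdot))\|_{L^r(\mu)}<\infty$, and since $T=T_\Phi$ we have $\Phi(w)(\lambda)=T(\lambda\chi_\Omega)(w)$, so the $\Phi_n$ converge to the representing function as required.

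I expect the main obstacle to be exactly this last step: upgrading the essentially bounded approximants guaranteed by Theorem \ref{repFat} to simple ones while retaining the uniform control of the Lipschitz norms. The decisive device is the standard simple-function approximation of a strongly measurable function, applied with a dense set containing the zero map, which forces the pointwise domination $Lip(\Phi_n(\cdot))\le 2\,Lip(\Phi(\cdot))$ and thereby transfers the $L^r$-bound. The remaining verifications, namely the inclusion $L^p(\mu)\subseteq L^q(\mu)$, the Fatou property of $L^q(\mu)$, and the computation $\mathcal M(L^p(\mu),L^q(\mu))=L^r(\mu)$, are routine and are the same facts already invoked in the preceding corollaries.
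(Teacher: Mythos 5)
Your proposal is correct and follows the same route as the paper, which gives no explicit argument beyond the remark that the corollary follows from Theorem \ref{repFat} once one notes that $L^p(\mu)\subseteq L^q(\mu)$ for finite $\mu$ and $p\ge q$, that $L^q(\mu)$ has the Fatou property, and that $\mathcal M(L^p(\mu),L^q(\mu))=L^r(\mu)$; you check exactly these three facts. Where you genuinely add something is in taking seriously the one mismatch the paper glosses over: condition (ii) of Theorem \ref{repFat} produces \emph{essentially bounded} approximants (the functions $\psi_n=\Phi_n\chi_{B_n}+\Phi\chi_{B_n^c}$ in its proof are not simple, since they coincide with $\Phi$ on part of $\Omega$), whereas the corollary asserts \emph{simple} ones. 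Your nearest-point construction from a countable dense subset of the essential range containing the zero map, which yields simple $\Phi_n\to\Phi$ $\mu$-a.e.\ with the pointwise domination $Lip(\Phi_n(w))\le 2\,Lip(\Phi(w))$, closes this gap correctly. It is in fact more robust than the alternative suggested by the paper's surrounding discussion (norm density of simple functions in the Bochner space $L^r(\mu,Lip_0(\mathbb R))$, followed by passing to an a.e.\ convergent subsequence): that density argument only works for $r<\infty$, i.e.\ $p>q$, while your pointwise bound also covers the case $p=q$, where $r=\infty$ and simple functions are not dense in $L^\infty(\mu,Lip_0(\mathbb R))$.

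One caveat on your reverse implication: there, $T$ being lattice Lipschitz is \emph{not} a hypothesis, so you cannot invoke it (nor Lemma \ref{lem:def_Phi} or Theorem \ref{theo:carac_Phi}, which require it) to identify $T$ with $T_\Phi$. With only the literal assumption that the simple functions converge a.e.\ to the function $w\mapsto T(\lambda\chi_\Omega)(w)$, nothing constrains $T$ off the constant functions, and the implication would be false for an operator that is nice on constants and arbitrary elsewhere. What actually makes the argument work --- both for you and for the paper --- is reading the phrase ``the representing function'' as part of the hypothesis, i.e.\ $T=T_\Phi$; this looseness is in the paper's statement and is not introduced by you. Once $T=T_\Phi$ is granted, your reduction to Theorem \ref{repFat}(ii) (simple functions are essentially bounded, and the uniform $L^r$ bound is precisely the uniform $\mathcal M(X,Y)$ bound) is exactly right.
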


Although $\Delta_p$ is not a tensor norm, we always have that the inequalities $\varepsilon \le \Delta_r \le \pi$ hold for tensor products as the one above, where $\epsilon $ is the injective tensor norm and $\pi$ the projective one (see \cite[Ch.7]{defa}). For example,  there is a surjection from the tensor product $F \widehat\otimes_\pi E'$ onto the nuclear operators $\mathcal N(E,F)$ between $E$ and $F.$ On the other hand, it is also known that the dual of the Arens-Ells space $\AE(\mathbb R)$ is $ Lip_0(\mathbb R).$
Taking into account the inequality $\Delta_r \le \pi$ we have that
we can identify every nuclear operator $N: \AE(\mathbb R) \to L^r(\mu)$ with a strongly lattice Lipschitz operator $T_N:L^p(\mu) \to L^q(\mu)$ through the isomorphisms written above.

Let us finish the paper with some comments on the suitable applications of the results in pure analysis. It is well-known that multiplication operators are the key in the contexts of the spaces of integrable functions to obtain good diagonalization settings for operators between infinite dimensional function spaces. For example, they allow to get some fundamental tools for the geometry of function spaces, as the circle of ideas that permits to relate lattice geometric properties of function spaces (p-convexity, p-concavity, Boyd indexes,...) with factorization of operators through $L_p-$spaces. Some classical results, as the famous Rosenthal Theorem, as well as the factorization theorems of Krivine, Maurey, Nikishin, Pietsch, Pisier and many  others are in the center of, for example, summability theory for operators or the geometry of the Banach spaces. We are then intended to search for the generalization of the above mentioned results in the context of the Lipschitz operators, for which it seems to be necessary to know the main properties of the particular class of superposition operators which we call lattice Lipschitz operators.

A lot of questions are still open regarding the results presented in this paper.  Although we have shown some representation tools, we believe that it is possible to obtain more general results. We write below two open questions.

\begin{itemize}

\item Which requirements are needed to assure that all lattice Lipschitz operators are strongly lattice Lipschitz? Which are the spaces---others than the trivial ones---for which this holds?

\item Are all the strongly lattice Lipschitz operators between Banach function spaces $X(\mu)$ and $Y(\mu)$ representable by strongly measurable functions in $\mathcal M(X,Y)$ with weaker requirements on $Y(\mu)$? 

\end{itemize}

\section*{Ethics declarations}

\paragraph{\bf Conflict of interest} On behalf of all authors, the corresponding author states that there is no conflict of interest.

\end{document}